\newcommand{\nn}{\nonumber}
\newcommand{\bea}{\begin{eqnarray}}
\newcommand{\eea}{\end{eqnarray}}
\def\beq#1#2\eeq{
        \begin{equation}
        \label{#1}
            #2
        \end{equation}}
\newcommand{\cH}{{\cal H}}
\newcommand{\al}{\alpha}
\def\btheor#1\etheor{
        \begin{theor}
            #1
        \end{theor}
    }
    \def\bsled#1\esled{
        \begin{sled}
            #1
        \end{sled}   }
\newtheorem{theorem}{Theorem}
\newtheorem{lemma}{Lemma}
\newtheorem{cor}{Corollary}
\newtheorem{remark}{Remark}
\def\hm#1{#1\nobreak\discretionary{}{\hbox{\m@th$#1$}}{}}
\def\mi#1{\discretionary{\hbox{\m@th$#1$}}{\hbox{\m@th$#1$}}{}}
\begin{document}
\title{\bf Singular linear statistics of the Laguerre Unitary Ensemble and Painlev\'e III: Double scaling analysis.}
\author{Min Chen\thanks{chenminfst@gmail.com} and Yang Chen\thanks{Corresponding author: yayangchen@umac.mo, yangbrookchen@yahoo.co.uk and Telephone:(853)8822-8546.}\\
        Department of Mathematics, University of Macau,\\
        Avenida da Universidade, Taipa, Macau, China.
        }
\date{}
\maketitle
\begin{abstract}
We continue with the study of the Hankel determinant, defined by,\\
${\small
D_{n}(t,\alpha)=\det\left(\int_{0}^{\infty}x^{j+k}w(x;t,\alpha)dx\right)_{j,k=0 }^{n-1},}
$
generated by a singularly perturbed Laguerre weight,
$
w(x;t,\alpha)=x^{\alpha}{\rm e}^{-x}{\rm e}^{-t/x},x\in \mathbb{R}^+,\;\alpha>0,\;t>0,
$
obtained through a deformation of the Laguerre weight function,
$
w(x;0,\alpha)=x^{\alpha}{\rm e}^{-x},\;x\in\mathbb{R}^+,\; \alpha>0,
$
via the multiplicative factor ${\rm e}^{-t/x}$.
\\
An earlier investigation was made on the finite $n$ aspect of such determinants, which appeared in \cite{ci1}. It was found
that the logarithm of the Hankel determinant has an integral representation in terms of a particular Painlev\'{e} {\uppercase\expandafter{\romannumeral3}}(${\rm P_{III}},$ for short)
and its $t$ derivatives. In this paper we show that, under a double scaling, where $n$, the order of the
Hankel matrix tends to $\infty,$ and $t$, tends to $0^+$, the scaled---and therefore, in some sense, infinite
dimensional---Hankel determinant, has an integral representation in terms of a $C$ potential. The second order non-linear
ode satisfied by $C,$ after a change of variable, is another ${\rm P}_{III}$ transcendent, albeit with fewer number of parameters.
\\
Expansions of the double scaled determinant for small and large parameter
are obtained.

\end{abstract}

\vfill\eject

\setcounter{equation}{0}
\section{Introduction.}

The analysis of Hankel determinants plays an important role in random matrix theory \cite{M2004}.
The second author and his collaborators made use of a theorem in linear statistics to
study Hankel determinants and the associated orthogonal polynomials,
see \cite{BY2005, 1BCW2001, 2BCW2001, YN1998, YM1994}
based on Dyson's Coulomb Fluid \cite{Dyson1962}. See also \cite{YIsmail1997, YIsmail1994}. Bonan, Nevai and others
studied orthogonal polynomials with ladder operators.
Works on the Hankel determinants generated by the deformations of classical weight, maybe found in \cite{BAU1990, SA1994, SDP1987, SP1984, BC1986, BC1990},
and also in \cite{BY2009, YM2006, YIS1997, YIS2005, YG2005, YZ2010}.
 \\
Adler and Van Moerbeke obtained differential equations governing the logarithmic derivatives of Hankel determinants, and their generalization,
using a multi-time approach, \cite{adlervan}. The connections between the Hankel determinants generated by quite general semi-classical weights are established in \cite{B2009} and \cite{BEH2006}. The asymptotics of $n$ dimensional Toeplitz determinants with Fisher-Hartwig
 singularities are studied in
 \cite{DIK2011}, and in a review, \cite{K2011}.
\par
The joint probability density function of the eigenvalues, $\{x_{j}:j=1,..., n\}$ for an ensemble of
$n\times{n}$ Hermitian matrix is given by \cite{M2004}
\begin{equation*}
p(x_{1},x_{2},\ldots,x_{n})\prod_{j=1}^{n}dx_{j}=\frac{1}{D_{n}[w]}\:\frac{1}{n!}\:
\prod_{1\leq{j}<k\leq{n}}\left(x_{j}-x_{k}\right)^{2}\prod_{\ell=1}^{n}w(x_{\ell})dx_{\ell},
\end{equation*}
where $D_{n}[w]$ is the normalization constant, or the partition function,
\begin{equation*}
D_{n}[w]=\frac{1}{n!}\int_{\mathbb{R}_{+}^{n}}\prod_{1\leq{j}<k\leq{n}}\left(x_{j}-x_{k}\right)^{2}
\prod_{\ell=1}^{n}w(x_{\ell})dx_{\ell}.
\end{equation*}
Here $w(x)$ is a positive weight function supported on $\mathbb{R}_{+},$ with moments given by,
$$
\mu_j[w]:=\int_{\mathbb{R}_{+}}x^{j}w(x)dx,\;\;\;j\in\{0,1,2,...\}.
$$
It is a well-known fact that $D_n[w]=\det(\mu_{j+k}[w])_{0\leq j,k\leq n-1},$ and this  gives a link between the determinant
of the Hankel matrix $(\mu_{j+k})$ and the partition function $D_{n}[w]$ given by the multiple integral.
\par
A linear statistics is defined to be the sum of a function $f(x)$ evaluated at the random  variables
$\{x_{j}\in{\mathbb{R}_{+}}:j=1,\ldots,n\},$ namely,
$$\sum_{j=1}^{n}f(x_{j}).$$
\\
In our problem, $f(x)>0,\;\;x\in\mathbb{R}_{+}.$
\\
Denote by $M_f(t)$ the moment generating of the linear statistics. This is obtained by a Laplace transform of the probability density function,
$\mathbb{P}_f(Q),$
\begin{equation*}
M_{f}(t)=\int_{\mathbb{R}_{+}}\mathbb{P}_{f}(Q)e^{-tQ}dQ
=\frac{{\rm det}\left(\mu_{j+k}(t)\right)_{j,k=0}^{n-1}}{{\rm \det}\left(\mu_{j+k}(0)\right)_{j,k=0}^{n-1}},
\end{equation*}
where
\begin{equation*}
\mu_{j}(t):=\int_{\mathbb{R}_{+}}x^{j}w(x)e^{-tf(x)}dx, \qquad j\in\{0,1,\ldots\}.
\end{equation*}
Hence the moment generating function becomes the quotient of Hankel determinants.
\par
For the problem at hand, the finite $n$ version of which first appeared in \cite{ci1}, where $f(x):=1/x$, and
$w(x)$ is the Laguerre weight supported on $\mathbb{R}_{+}:$
$$
w(x)=x^{\alpha}{\rm e}^{-x},\;\;\alpha > 0.
$$
Such a linear statistics leads to the perturbed Laguerre weight,
\begin{equation}\label{weight}
 w(x; t,\alpha)=w(x){\rm e}^{-\frac{t}{x}}=x^{\alpha}{\rm e}^{-x -t/x}, \quad x\in \mathbb{R}_{+}, \quad t\geq 0, \quad \alpha>0,
\end{equation}
and the perturbed Hankel determinant:
\begin{equation}\label{det}
D_{n}(t,\alpha) =\det\left(\int_{\mathbb{R}_{+}}x^{j+k}w(x;t,\alpha)dx\right)_{j,k =0}^{n-1}.
\end{equation}
Such a perturbation is motivated in part by a finite temperature integrable quantum field theory \cite{luk}.
The Hankel determinant given by (\ref{det}) maybe interpreted  as the generating function for the distribution function of the
Wigner delay time in chaotic cavities and recently studied in \cite{TM2013} through large deviation techniques.
Mathematically, $f(x)=1/x,$  introduces an infinitely strong zero on the weight function
at 0, and has the effect of pushing the left end point of the equilibrium density,
(or charge density, if we view the eigenvalues as charges), away from $0$ at a slow speed. If $a$ is left end point of the support
of the density, then for $t>0,$ and large $n$, $a={\rm O}\left(\frac{t^{2/3}}{n^{1/3}}\right).$ See the top equation of page 16.
\\
For the Laguerre weight, $x^{\alpha}{\rm e}^{-x},\;\alpha>0,\;x\in\mathbb{R}_{+},$
$a={\rm O}\left(\frac{\alpha^2}{n}\right).$
\par
One finds, in applications, other forms of $f(x)$. For example, the characterization of Shannon capacity \cite{YM2012} in the study of
the outage and error probability in wireless communication. In this situation, the Laguerre weight is multiplied
by $\exp(-\lambda\ln(1+x/t))$. Here $\lambda$ is the parameter which ``generates" the Shannon capacity and $t (>0) $ plays the role
of the time variable in the ensuing Painleve V equation. See Basor and Chen \cite{BY2014}, for a recent review of this and other related matters.
A similar multiplicative factor on the Jacobi weight,
$x^{\alpha}(1-x)^{\beta},\;0\leq x\leq 1$ for $\alpha=\pm\;1/2,$ and $\beta=\pm\:1/2,$ arose in enumeration problems related to the moduli space of super-symmetric QCD in the Veneziano limit \cite{CJJM2013}.
The Coulomb Fluid interpretation of the eigenvalues has been adopted to compute statistical properties
involving large deviations \cite{MS2014}.
\par
\par
In this paper we shall be concerned with a double scaling scheme, where $t\to 0^+,$ and $n\to\infty$, such that $s:=(2n+1+\alpha)t$ is finite. And ultimately provide a description of the (double-scaled) Hankel determinant.
\par

The remainder of this paper is organized as follows. Section 2 recalls elementary\\
\noindent facts about orthogonal polynomials,
a description of the Dyson Coulomb Fluid, followed by recalling certain results obtained in Chen and Its \cite{ci1}.
We show that that double-scaled and in some sense infinite dimensional Hankel determinant, has an integral
representation in terms of a $C-$ potnetial, which satisfies a second order non-linear ode in $s$, which is equivalent
to a ``lesser" ${\rm P_{III}}.$   The $\sigma$ function of Jimbo-Miwa-Okamoto associated with
this particular ${\rm P_{III}}$ is also found. In section 3,  we compute formal power series for the small $s$
and large $s$  behavior of the $C(s)$. In section 4, an evaluation is made on the constant term of the monic polynomials
orthogonal with respect to $w(x;t,\alpha),$ namely,
$P_n(0;t,\alpha),$ for large $n$ and $s=2nt.$ We obtained relationship between certain constants, and
from which the value of the constant of the asymptotic expansion of the double-scaled Hankel determinant
is conjectured. We conclude in section 5.
\\
\section{Double Scaling.}
In this section we study the effect of double scaling by sending $n\to\infty$ and $t\to 0^+$ and such that $s:=(2n+1+\alpha)t$ is fixed.
\par
To set the stage for later development, we recall for the Reader that the Hankel determinant can also be expressed as
\begin{equation}\label{hn}
D_n(t,\alpha) = \prod_{j=0}^{n-1}h_j(t),
\end{equation}
where $\{h_j(t): j=0,...,n-1\}$ are the squares of the $L^2$ norm of the monic
polynomials $P_n(x)$ orthogonal with respect to $w(x;t,\alpha)$, namely,
\begin{equation}\label{poly}
\int_{\mathbb{R}_{+}}P_{j}(x)P_{k}(x)w(x;t,\alpha)dx = h_j(t)\delta_{jk}.
\end{equation}
The monic polynomials satisfy the recurrence relations,
$$
xP_{n}(x)=P_{n+1}(x)+\alpha_{n}(t)P_{n}(x)+\beta_{n}(t)P_{n-1}(x),
$$
subject to the initial data, $P_{0}(x)=1$, and $\beta_{0}P_{-1}(x)=0$. It is clear that
the coefficients of the polynomials $P_{n}(x)$ and the recurrence coefficients $\alpha_{n}$,
$\beta_{n}$ all depend on $t$. We shall see later that $\alpha_n(t)$, the diagonal recurrence coefficient plays an important role.
\par
Heine's formula, gives the multiple integral representation of the orthogonal polynomials,
\bea
P_n(z;t,\alpha)=\frac{1}{D_n(t,\alpha)}\frac{1}{n!}\int_{\mathbb{R}_{+}^{n}}\prod_{m=1}^{n}(z-x_m)
\:\prod_{1\leq j<k \leq n}(x_{j}-x_{k})^2\prod_{\ell=1}^{n}w(x_{\ell};t,\alpha)dx_{\ell}, \nonumber
\eea
 see Szeg{\"o} \cite{Szego1939}.
Here, the Hankel determinant reads,
$$
D_n(t,\alpha)=\det\left(\int_{\mathbb{R}_{+}}x^{i+j}w(x;t,\alpha)dx\right)_{0\leq i,j\leq n-1},
$$
$$
\quad\quad=\frac{1}{n!}\int_{\mathbb{R}_{+}^{n}}\prod_{1\leq j<k \leq n}(x_{j}-x_{k})^2
\prod_{\ell=1}^{n}w(x_{\ell};t,\alpha)dx_{\ell}. \nonumber
$$
\\
It is clear that,
\begin{equation}\label{alpha}
(-1)^nP_n(0; t,\alpha) = \frac{D_{n}(t, \alpha + 1)}{D_{n}(t, \alpha)},
\end{equation}
and for the {\it monic} Laguerre polynomials, namely, $P_n(z;0,\alpha),$ there is a closed form evaluation at $z=0$, see ((5.1.7), \cite{Szego1939}),
\begin{equation}\label{gamma}
(-1)^nP_n(0; 0,\alpha)=\frac{\Gamma(n+1+\al)}{\Gamma(1+\al)},\;\;\alpha>-1.
\end{equation}
\\
We give a brief description of Coulomb Fluid.
The energy of a system of $n$ logarithmically repelling particles on the line, confined by an external potential $\rm{v}$ reads
$$
E(x_1,x_2,...,x_n)=-2\sum_{1\leq j<k\leq n}\ln|x_j-x_k|+\sum_{j=1}^{n}{\rm v}(x_j).
$$
For sufficiently large $n$, the particles may be approximated by a continuous fluid\cite{Dyson1962}, with a density $\sigma$.
In the Coulomb fluid approximation, $\sigma(x),$ assumed to be supported on $[a,b]$ is obtained by minimizing the free-energy functional,
$F[\sigma]$,
\begin{equation*}
\min\limits_{\sigma>0}F[\sigma] \quad {\rm subject \quad to} \quad \int_{a}^{b}\sigma(x)dx=n,
\end{equation*}
where
\begin{equation*}
F[\sigma]:=\int_{a}^{b}\sigma(x){\rm v}(x)dx-\int_{a}^{b}\int_{a}^{b}\sigma(x)\ln|x-y|\sigma(y)dxdy.
\end{equation*}
\par
Upon minimization, the density $\sigma(x)$ is found to satisfy the integral equation,
\begin{equation}\label{1rr}
A={\rm v}(x)-2\int_{a}^{b}\ln|x-y|\sigma(y)dy, \qquad x\in [a,b],
\end{equation}
where $A$, the Lagrange multiplier imposes the constraint $\int_{a}^{b}\sigma(x)dx=n$. For more information, see \cite{YIsmail1997} and
references therein. Note that $A$ is a constant independent of $x$ for $x\in[a,b]$, but $A$ and $\sigma$ depend on $t$ and $n$.
The relations between Coulomb fluid and orthogonal polynomials, where the potential ${\rm v}$, being convex,
can be found, for example, in \cite{YIsmail1997}. A description of the density for exponential
weights and strong asymptotics for the  orthogonal polynomials can be found in \cite{DKMVZ1999}. For further information on the equilibrium density see \cite{Defit}.
\par
For the problem at hand, the density, supported on $[a,b]$ reads,
$$
\sigma(x)=\frac{\sqrt{(b-x)(x-a)}}{2\pi}\left[\left(\frac{\alpha}{\sqrt{ab}}+\frac{t(a+b)}{2(ab)^{\frac{3}{2}}}\right)
\frac{1}{x}+\frac{t}{x^{2}\sqrt{ab}}\right], \quad a \leq x \leq b.
$$
This is obtained by solving a singular integral equation, found by taking a derivative with respect to $x$ on (\ref{1rr});
$$
{\rm v}'(x)-2P\int_{a}^{b}\frac{\sigma(y)}{x-y}dy=0, \quad x\in[a,b].$$
Here $P$ denotes the Cauchy principal value,  with the condition that the equilibrium density, $\sigma$, vanishes
at the end points of the support.
\\
See \cite{YIsmail1997, YN1998}, for further discussion.
For this problem ${\rm v}(x)=-\ln w(x; t,\alpha),$ and $${\rm v}'(x)=-\frac{\alpha}{x}+1-\frac{t}{x^2}.$$
\\
The end points of the interval $[a,b]$ are determined by the normalization condition
$$
\int_{a}^{b} \sigma(x)dx =n,
$$
and a supplementary conditions,  which can be found for example, in \cite{YIsmail1997} and \cite{YN1998}. These are
\bea
\int_{a}^{b}\frac{x{\rm v}'(x)}{\sqrt{(b-x)(x-a)}}dx=2\pi\:n, \nonumber
\eea
and
\bea
\int_{a}^{b}\frac{{\rm v}'(x)}{\sqrt{(b-x)(x-a)}}dx=0. \nonumber
\eea
With the aid of the integrals in Appendix $A$, the end points of the support, $a$ and $b$, satisfy algebraic equations,
\bea\label{c11}
2n+\alpha+\frac{t}{\sqrt{ab}}=\frac{a+b}{2},
\eea
and
\bea\label{c12}
\frac{(a+b)t}{2(ab)^{\frac{3}{2}}}+\frac{\alpha}{\sqrt{ab}}=1.
\eea
We state a lemma here, which gives further insight into a particular
\\
${\rm P_{\uppercase\expandafter{\romannumeral3}'}}(-4(2n+1+\alpha), -4\alpha, 4, -4),$
that appeared in the finite $n$ setting.
\begin{lemma}
The geometric mean of  $a$ and $b,$ namely, $\widetilde{X}:=\sqrt{ab}$
satisfies the quartic equation,
\bea\label{c13}
\widetilde{X}^4-\al\:\widetilde{X}^3-(2n+\al)t\:\widetilde{X}-t^2=0.
\eea
\end{lemma}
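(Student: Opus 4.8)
The plan is simply to eliminate the sum $a+b$ between the two algebraic relations \mref{c11} and \mref{c12}, which will leave a single polynomial equation in $\widetilde{X}=\sqrt{ab}$. Write $X:=\widetilde{X}$ and $p:=\tfrac{a+b}{2}$. Then \mref{c11} reads
$$
p = 2n+\alpha+\frac{t}{X},
$$
while \mref{c12}, after multiplying through by $X^{3}$ (recall $t>0$ and, since the support $[a,b]\subset\mathbb{R}_{+}$ with $0<a<b$, also $X=\sqrt{ab}>0$), becomes $pt+\alpha X^{2}=X^{3}$, i.e.
$$
p=\frac{X^{3}-\alpha X^{2}}{t}.
$$

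The next step is to equate the two expressions for $p$:
$$
2n+\alpha+\frac{t}{X}=\frac{X^{3}-\alpha X^{2}}{t}.
$$
Multiplying both sides by $tX$ (again legitimate because $t,X>0$) gives
$$
(2n+\alpha)tX+t^{2}=X^{4}-\alpha X^{3},
$$
and rearranging yields exactly $X^{4}-\alpha X^{3}-(2n+\alpha)tX-t^{2}=0$, which is \mref{c13}. This completes the argument.

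There is essentially no analytic obstacle here; the substance of the lemma is precisely this elimination, so the proof is a short computation. The only point worth flagging is the admissibility of dividing and multiplying by $X$, which rests on $ab>0$ — guaranteed by the fact that the equilibrium density is supported on an interval $[a,b]$ strictly inside $\mathbb{R}_{+}$ for $t>0$. One may additionally remark, though it is not needed for the statement, that \mref{c13} read as a quartic in $X$ is consistent with there being a single admissible (positive) branch $\widetilde{X}(t,n,\alpha)$, which is the one entering the density $\sigma(x)$ and the subsequent double-scaling analysis.
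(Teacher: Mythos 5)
Your proposal is correct and follows exactly the paper's argument: the lemma is proved by eliminating $\tfrac{a+b}{2}$ between \mref{c11} and \mref{c12}, which is precisely the elimination you carry out explicitly (your algebra checks out, and the remark on $X=\sqrt{ab}>0$ is a harmless extra justification).
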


\begin{proof}
The equation $(\ref{c13})$ is found by eliminating $\frac{a+b}{2}$ from $(\ref{c11})$ and $(\ref{c12}).$
\end{proof}
In order to characterize the large $n$ behavior of the Hankel determinant,
we recall certain results in \cite{ci1}. For convenience, we use $t$ and $y_n(t)$ instead of $s$ and $a_n(s)$ adopted in\cite{ci1}.
See (Theorem 1, \cite{ci1}).
\begin{theorem}
The diagonal recurrence coefficients, $\al_n(t),$ maybe expressed as,
$$
\al_n(t)=2n+1+\alpha+y_n(t),
$$
where the auxiliary quantity $y_{n}(t),\; n=0,1,2,...$ satisfies
\bea\label{c2}
y''_{n}=\frac{(y'_{n})^2}{y_{n}}-
\frac{y'_{n}}{t}+(2n+1+\alpha)\frac{y_{n}^{2}}{t^{2}}+\frac{y_{n}^{3}}{t^{2}}+\frac{\alpha}{t}-\frac{1}{y_{n}},
\eea
with the initial conditions
\bea\label{c3}
y_n(0)=0, \quad y'_{n}(0)=\frac{1}{\alpha}, \quad \alpha>0.
\eea
If $y_{n}(t)$:=-q(t), then $q(t)$ is a solution of ${\rm P_{\uppercase\expandafter{\romannumeral3}'}}(-4(2n+1+\alpha), -4\alpha, 4, -4)$,
 following the convention of \cite{KSO2006}.
\end{theorem}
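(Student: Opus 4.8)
\medskip
\noindent\textbf{Proof proposal.}\quad The natural route is the ladder--operator scheme for the orthogonal polynomials associated with the deformed weight $w(x;t,\alpha)=x^{\alpha}{\rm e}^{-x-t/x}$, combined with the Toda--type equations obtained by differentiating the orthogonality relations in $t$; an isomonodromy/Riemann--Hilbert derivation is also available but less self-contained here. First I would write
$$
P_n'(x)=-B_n(x)P_n(x)+\beta_nA_n(x)P_{n-1}(x),\qquad
A_n(x)=\frac{1}{h_n}\int_0^\infty\frac{{\rm v}'(x)-{\rm v}'(y)}{x-y}P_n^2(y)\,w(y;t,\alpha)\,dy,
$$
and similarly for $B_n$ with $P_n^2$ replaced by $P_nP_{n-1}$ and $h_n$ by $h_{n-1}$. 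Since ${\rm v}'(x)=-\alpha/x+1-t/x^2$, the kernel telescopes to $\dfrac{{\rm v}'(x)-{\rm v}'(y)}{x-y}=\dfrac{\alpha}{xy}+\dfrac{t}{x^2y}+\dfrac{t}{xy^2}$, so that $A_n$ and $B_n$ are rational functions of the shape $\dfrac{c_0}{x}+\dfrac{c_1}{x^2}$ whose coefficients involve only the auxiliary sequences $R_n^{(k)}:=h_n^{-1}\int_0^\infty y^{-k}P_n^2w\,dy$ and $r_n^{(k)}:=h_{n-1}^{-1}\int_0^\infty y^{-k}P_nP_{n-1}w\,dy$, $k=1,2$.

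Next I would impose the compatibility conditions $(S_1)$: $B_{n+1}+B_n=(x-\alpha_n)A_n-{\rm v}'$, $(S_2)$: $1+(x-\alpha_n)(B_{n+1}-B_n)=\beta_{n+1}A_{n+1}-\beta_nA_{n-1}$, and the sum rule $(S_2')$: $B_n^2+{\rm v}'B_n+\sum_{j=0}^{n-1}A_j=\beta_nA_nA_{n-1}$, and equate coefficients of the powers of $1/x$. This produces at once identities such as $\alpha R_n^{(1)}+tR_n^{(2)}=1$ and $\alpha r_n^{(1)}+tr_n^{(2)}=-n$, and altogether a closed finite system of nonlinear difference equations among $\alpha_n$, $\beta_n$ and the auxiliary sequences. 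Substituting $\alpha_n(t)=2n+1+\alpha+y_n(t)$ then brings this system to its most compact form.

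Then I would bring in the $t$--dependence. Differentiating (\ref{poly}) and using $\partial_t w=-w/x$ gives $\frac{d}{dt}\ln h_n=-R_n^{(1)}$, and the analogous manipulations of $\alpha_n=h_n^{-1}\int_0^\infty xP_n^2w\,dx$ and $\beta_n=h_n/h_{n-1}$ yield first--order $t$--evolution equations whose right--hand sides once more involve only the auxiliary sequences and the recurrence coefficients. \emph{The heart of the proof, and the step I expect to be the main obstacle, is the elimination}: one must feed the difference relations above into these Toda equations so that $\beta_n$, all of the $R_n^{(k)}$ and $r_n^{(k)}$, and the shifts $n\mapsto n\pm1$ cancel, leaving a single second--order ODE in $t$ for $y_n$. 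This is intricate bookkeeping; carried out correctly it reproduces (\ref{c2}).

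Finally, for the initial data: at $t=0$ the weight degenerates to the Laguerre weight, whose recurrence coefficient is $2n+1+\alpha$, so $y_n(0)=0$. Inserting an ansatz $y_n(t)\sim ct$ as $t\to0^+$ into (\ref{c2}), the $1/t$ part of the right--hand side collapses to $(\alpha-1/c)/t$, which must vanish in order that $y_n''(t)$ remain bounded; hence $c=1/\alpha$, i.e.\ $y_n'(0)=1/\alpha$ (equivalently one may compute $\frac{d}{dt}\alpha_n|_{t=0}$ directly from $\mu_j'(0)=-\Gamma(j+\alpha)$ together with the representation of $\alpha_n$ as a ratio of Hankel determinants). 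The last assertion is then immediate: putting $y_n=-q$ in (\ref{c2}) gives
$$
q''=\frac{(q')^2}{q}-\frac{q'}{t}+\frac{q^3-(2n+1+\alpha)q^2}{t^2}-\frac{\alpha}{t}-\frac{1}{q},
$$
and reading off the four coefficients identifies $q$ as a solution of ${\rm P_{III'}}(-4(2n+1+\alpha),-4\alpha,4,-4)$ in the normalization of \cite{KSO2006}.
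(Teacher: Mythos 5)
The paper itself contains no proof of this theorem: it is recalled verbatim from Chen and Its \cite{ci1}, and the ladder-operator/compatibility-condition route you outline is precisely the method used there, so your choice of approach is the right one and matches the cited source. The ingredients you do write down are correct: the kernel decomposition of $({\rm v}'(x)-{\rm v}'(y))/(x-y)$, the sum rules $\alpha R_n^{(1)}+tR_n^{(2)}=1$ and $\alpha r_n^{(1)}+tr_n^{(2)}=-n$ (which also follow directly from $\int_0^\infty\frac{d}{dy}\bigl(P_n^2w\bigr)dy=0$ and $\int_0^\infty\frac{d}{dy}\bigl(P_nP_{n-1}w\bigr)dy=0$, the weight vanishing at both endpoints), the Toda-type relation $\frac{d}{dt}\ln h_n=-R_n^{(1)}$, and the final parameter identification: putting $y_n=-q$ does give ${\rm P_{III'}}(-4(2n+1+\alpha),-4\alpha,4,-4)$. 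One point worth making explicit: comparing the $1/x$ coefficients in $(S_1)$, with $A_n(x)=\frac{1}{x}+\frac{tR_n^{(1)}}{x^2}$ and $B_n(x)=-\frac{n}{x}+\frac{tr_n^{(1)}}{x^2}$, forces $\alpha_n=2n+1+\alpha+tR_n^{(1)}$; so $y_n:=tR_n^{(1)}$ is not merely a convenient substitution but is where $y_n$ comes from.

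As a proof, however, the attempt is incomplete exactly at what you yourself call its heart: the elimination producing (\ref{c2}) is never carried out, only asserted to work. What is required (and what \cite{ci1} does) is to extract from the $1/x$ and $1/x^2$ coefficients of $(S_1)$ and $(S_2')$ the algebraic relations among $R_n^{(1)},r_n^{(1)},\alpha_n,\beta_n$, combine them with the $t$-evolution of $h_n$, $\alpha_n$, $\beta_n$ to obtain two coupled first-order (Riccati-type) equations for $R_n^{(1)}$ and $r_n^{(1)}$, and then eliminate $r_n^{(1)}$ to arrive at a single second-order ODE for $y_n=tR_n^{(1)}$; without this step the equation that constitutes the theorem is not derived. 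The initial data are also treated only heuristically: cancelling the $1/t$ singularity under the ansatz $y_n\sim ct$ presupposes the smoothness of $y_n$ at $t=0^+$ that you are trying to establish. The clean route is the one your own framework provides: since $y_n=tR_n^{(1)}$, one has $y_n(0)=0$ and $y_n'(0)=R_n^{(1)}(0)=\frac{1}{h_n(0)}\int_0^\infty y^{-1}P_n^2(y;0,\alpha)\,y^{\alpha}{\rm e}^{-y}dy$, and a classical Laguerre integral evaluates this to $1/\alpha$ for $\alpha>0$ (alternatively, the moment computation you mention in parentheses). With the elimination actually performed and the initial conditions obtained this way, your outline would reproduce the proof in \cite{ci1}.
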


Substituting
\bea
y_{n}(t):=\frac{t}{X_{n}(t)}, \nonumber
\eea
into $(\ref{c2})$ it is seen that $X_n(t),$ satisfies,
\bea\label{c5}
X''_{n}=\frac{(X'_{n})^2}{X_{n}}-\frac{X'_{n}}{t}-\frac{\alpha{(X_{n})^{2}}}{t^{2}}-\frac{2n+1+\alpha}{t}
+\frac{X_{n}^{3}}{t^{2}}-\frac{1}{X_{n}},
\eea
with the boundary condition,
$$
X_{n}(0)=\alpha, \quad \alpha>0,
$$
which is recognized to be an equivalent ${\rm P_{\uppercase\expandafter{\romannumeral3}'}}(-4\alpha, -4(2n+1+\alpha), 4, -4)$.
\\
If we disregard the derivatives in $(\ref{c5})$, then $X_{n}$ satisfies a quartic
equation,
\bea
X_n^{4}-\alpha{X_n^{3}}-(2n+1+\alpha)t\:X_n-t^{2}=0. \nonumber
\eea
Note that the quartic obtained above becomes the quartic of $(\ref{c13})$, if we
replace $2n+1$ by $2n$.
Theorem 2 in \cite{ci1} reveals an important relation between $X_{n}$ and the logarithmic derivative of the Hankel determinant
which we recall in the following Theorem.
\begin{theorem}
\begin{align}\label{c7}
\ln\frac{D_{n}(t,\alpha)}{D_{n}(0,\alpha)}
=\int_{0}^{t}\left(\frac{\xi}{2}-\frac{1}{4}\left(X_{n}-\alpha\right)^{2}-(n+\frac{\alpha}{2})
\frac{\xi}{X_{n}}-\frac{\xi^{2}}{4X_{n}^{2}}+\frac{\xi^{2}(X'_{n})^{2}}{4X_{n}^{2}}\right)\frac{d\xi}{\xi},
\end{align}
where $D_{n}(0,\alpha)$ has a closed form evaluation;
\begin{equation*}
D_{n}(0,\alpha)=\frac{G(n+1)G(n+\alpha+1)}{G(\alpha+1)}.
\end{equation*}
Here $G(z),\:z\in \mathbb{C}\cup\{\infty\}$ is the Barnes $G$-function, an entire function of order $2$, and  satisfies the functional relation $G(z+1)=\Gamma(z)\:G(z),$
where $G(1)=1$.
\end{theorem}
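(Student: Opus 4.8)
\medskip
\noindent\emph{Plan of proof.} The identity \eqref{c7} is the reformulation, in the pair $(t,X_n)$, of the finite-$n$ Hamiltonian identity of Chen and Its \cite{ci1}. I would first reduce the logarithmic derivative to a finite sum: writing $D_n(t,\alpha)=\prod_{j=0}^{n-1}h_j(t)$ and using $\partial_t w(x;t,\alpha)=-x^{-1}w(x;t,\alpha)$ together with the fact that $\partial_t P_j$ has degree at most $j-1$ (so the cross term in the $t$-derivative of \eqref{poly} vanishes), one gets $\partial_t\ln h_j=-R_j(t)$ with $R_j(t):=h_j^{-1}\int_0^\infty x^{-1}P_j^2(x)\,w(x;t,\alpha)\,dx$. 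Hence $t\,\partial_t\ln D_n=-t\sum_{j=0}^{n-1}R_j$, and the content of \eqref{c7} is precisely that this sum collapses to a local algebraic function of $X_n$, $X_n'$ and $t$.

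The crux is closing this sum by the ladder-operator machinery for semi-classical weights. With $\mathsf{v}(x)=-\ln w(x;t,\alpha)=-\alpha\ln x+x+t/x$, the lowering and raising coefficients $A_n(x),B_n(x)$ are rational in $x$ with a double pole at $x=0$, so besides $R_n$ and its companion $r_n(t):=h_{n-1}^{-1}\int_0^\infty x^{-1}P_n(x)P_{n-1}(x)\,w(x;t,\alpha)\,dx$ one is also led to the $x^{-2}$-weighted analogues. Imposing the compatibility conditions (customarily $(S_1)$, $(S_2)$, and in particular the summed condition $(S_2')$, which contains $\sum_{j=0}^{n-1}A_j(x)$) produces a finite closed system of algebraic relations among $\{R_j,r_j,\alpha_j,\beta_j\}$; this is exactly what turns $\sum_{j=0}^{n-1}R_j$ into an expression involving only $\alpha_n$, $\beta_n$ and $t$. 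Feeding in $\alpha_n(t)=2n+1+\alpha+y_n(t)$ (recalled above) and the relation between $\beta_n$ and $(y_n,y_n')$ that emerges from the same system, one obtains a closed form for $t\,\partial_t\ln D_n$ in the variable $y_n$; substituting $y_n=t/X_n$, so that $t\,\partial_t\ln X_n=tX_n'/X_n=1-ty_n'/y_n$, converts it after a short computation into
\[
t\,\frac{d}{dt}\ln D_n(t,\alpha)=\frac{t}{2}-\frac14\bigl(X_n-\alpha\bigr)^2-\Bigl(n+\frac{\alpha}{2}\Bigr)\frac{t}{X_n}-\frac{t^2}{4X_n^2}+\frac{t^2(X_n')^2}{4X_n^2},
\]
which is $\xi$ times the integrand of \eqref{c7} at $\xi=t$. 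I expect this to be the main obstacle: verifying that the apparently $n$-term sum reduces to a local quantity, and correctly organizing the interplay of the $x^{-1}$ and $x^{-2}$ moments. Everything after this is bookkeeping.

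Next I would integrate and fix the constant. Writing $\ln\frac{D_n(t,\alpha)}{D_n(0,\alpha)}=\int_0^t\bigl(\xi\,\partial_\xi\ln D_n\bigr)\,\xi^{-1}\,d\xi$ reproduces the right-hand side of \eqref{c7}; the only delicate point is convergence at $\xi=0$. Since $X_n(0)=\alpha$ and $X_n$ is smooth near $\xi=0$ (because $y_n$ is analytic there with $y_n(0)=0$, $y_n'(0)=1/\alpha\neq0$), each of the five terms in the bracket is $O(\xi)$ as $\xi\to0^+$, so the integrand is bounded there and the constant of integration is forced to vanish by the trivial identity $\ln\bigl(D_n(0,\alpha)/D_n(0,\alpha)\bigr)=0$.

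Finally, for the value of $D_n(0,\alpha)$ only the classical Laguerre case $t=0$ is needed: there $P_j(\cdot;0,\alpha)$ is the monic Laguerre polynomial with $h_j(0)=j!\,\Gamma(j+\alpha+1)$, whence $D_n(0,\alpha)=\bigl(\prod_{j=0}^{n-1}j!\bigr)\bigl(\prod_{j=0}^{n-1}\Gamma(j+\alpha+1)\bigr)$. The functional equation $G(z+1)=\Gamma(z)G(z)$ with $G(1)=1$ identifies the first product with $G(n+1)$ and the second with $G(n+\alpha+1)/G(\alpha+1)$, giving $D_n(0,\alpha)=G(n+1)G(n+\alpha+1)/G(\alpha+1)$, as claimed.
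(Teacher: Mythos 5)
Your outline is essentially a reconstruction of the proof in Chen and Its \cite{ci1}; the paper itself does not prove this theorem at all, but simply recalls it (together with \eqref{c8}--\eqref{c10}) from that reference. The parts of your argument that you actually carry out are correct: $\partial_t\ln h_j=-R_j$ follows exactly as you say from $\partial_t w=-x^{-1}w$ and orthogonality, so $t\,\partial_t\ln D_n=-t\sum_{j=0}^{n-1}R_j$ by \eqref{hn}; the convergence of the integral at $\xi=0$ is correctly justified, since $X_n(0)=\alpha$ and $y_n'(0)=1/\alpha$ make every term in the bracket ${\rm O}(\xi)$ (indeed $(X_n-\alpha)^2={\rm O}(\xi^2)$), so the integrand is bounded and the constant of integration vanishes; and the evaluation $h_j(0)=j!\,\Gamma(j+\alpha+1)$ for monic Laguerre polynomials, combined with $G(z+1)=\Gamma(z)G(z)$, does give $D_n(0,\alpha)=G(n+1)G(n+\alpha+1)/G(\alpha+1)$.

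The one substantive caveat is that the heart of the theorem --- the fact that $-t\sum_{j=0}^{n-1}R_j$ collapses to the local expression $\frac{t}{2}-\frac14(X_n-\alpha)^2-(n+\frac{\alpha}{2})\frac{t}{X_n}-\frac{t^2}{4X_n^2}+\frac{t^2(X_n')^2}{4X_n^2}$, i.e.\ precisely \eqref{c10} --- is only described in your proposal, not derived: you invoke the ladder-operator coefficients, the auxiliary quantities $R_n,r_n$, and the summed compatibility condition $(S_2')$, and assert that the resulting algebraic system closes the sum, but you do not exhibit the relations or the elimination that produces the stated formula in terms of $y_n$ (equivalently $X_n$) and its derivative. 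That computation is the entire analytic content of Theorem~2 of \cite{ci1}, and since you correctly identify both the mechanism and the intermediate target identity, your plan is the right one; it is, however, a plan rather than a proof at exactly the point where the work lies. Since the paper also imports this step wholesale from \cite{ci1}, your treatment is on par with the paper's, but if you intend a self-contained proof you must actually write down $A_n$, $B_n$ for ${\rm v}(x)=-\alpha\ln x+x+t/x$, derive the finite system from $(S_1)$, $(S_2')$, and perform the elimination leading to \eqref{c10}.
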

The next Theorem recalls equations (3.23) and (3.24) in \cite{ci1}.
\begin{theorem}
If
\bea\label{c8}
H_{n}(t):=t\frac{d}{dt}\ln{\frac{D_{n}(t,\alpha)}{D_{n}(0,\alpha)}},
\eea
then
\bea\label{c9}
(tH_{n}'')^{2}=\left[n-(2n+\alpha)H_{n}'\right]^{2}-4\left[n(n+\alpha)+tH_{n}'-H_{n}\right]H_{n}'(H_{n}'-1),
\eea
subject to the initial condition, $H_{n}(0)=0.$
\end{theorem}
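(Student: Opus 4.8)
The plan is to reduce the statement to the second-order equation $(\ref{c5})$ for $X_n$, via the integral representation $(\ref{c7})$ of Theorem~2. Write the integrand there as $g(\xi)/\xi$, where
\begin{equation*}
g(\xi) := \frac{\xi}{2} - \frac{1}{4}\bigl(X_n(\xi) - \al\bigr)^{2} - \Bigl(n + \frac{\al}{2}\Bigr)\frac{\xi}{X_n(\xi)} - \frac{\xi^{2}}{4\,X_n(\xi)^{2}} + \frac{\xi^{2}\,X_n'(\xi)^{2}}{4\,X_n(\xi)^{2}} .
\end{equation*}
By the fundamental theorem of calculus and the definition $(\ref{c8})$, $H_n(t) = t\cdot g(t)/t = g(t)$; in other words $H_n$ is already an \emph{explicit} rational function of $t$, $X_n = X_n(t)$ and $X_n' = X_n'(t)$, with no integral remaining. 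The initial condition $H_n(0) = 0$ then follows immediately from $X_n(0) = \al$ and the boundedness of $X_n'$ near $0$, since each term of $g$ carries a factor vanishing as $\xi \to 0$.

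I would then differentiate twice. Forming $H_n' = g'(t)$ produces one term containing $X_n''$; eliminating it with $(\ref{c5})$ leaves $H_n'$ as a rational function of $(t, X_n, X_n')$. Differentiating once more, and again substituting $(\ref{c5})$ for the single $X_n''$ that appears, gives $H_n''$ --- and hence $t H_n''$ --- explicitly in terms of $(t, X_n, X_n')$. The structural fact worth isolating is that, after clearing the natural denominator (a power of $X_n$), $t H_n''$ is a polynomial whose \emph{square} is precisely the right-hand side of $(\ref{c9})$; exhibiting that polynomial, rather than merely checking that the two sides coincide, is what makes the proof transparent.

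With $H_n$, $H_n'$ and $H_n''$ all expressed through the three quantities $t, X_n, X_n'$ --- which, for a solution of the second-order equation $(\ref{c5})$, may be regarded as independent --- the identity $(\ref{c9})$ reduces to a single algebraic relation. I would clear denominators so that both sides are polynomials in $t, X_n, X_n'$ and verify that the difference vanishes, invoking $(\ref{c5})$ once more wherever a residual $X_n''$, or a power of $X_n'$ exceeding what $(\ref{c5})$ allows, reappears. One must resist using the quartic $X_n^{4} - \al X_n^{3} - (2n+1+\al)\,t\,X_n - t^{2} = 0$ obtained by dropping the derivatives in $(\ref{c5})$: it is \emph{not} satisfied by $X_n(t)$ and is legitimate only as an asymptotic consistency check, not in the derivation. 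The main obstacle is exactly this bookkeeping --- the intermediate formula for $H_n''$ is bulky, and the eliminations must be organised so that the end result is manifestly $(\ref{c9})$ rather than merely zero after unmotivated cancellation.

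A cleaner route, which I would develop in parallel as a check, is the Hamiltonian one: $(\ref{c7})$ identifies $H_n$, up to an affine substitution in the independent variable, with the Jimbo--Miwa--Okamoto $\sigma$-function attached to the Painlev\'{e}~III$'$ of $(\ref{c5})$, so that $(\ref{c9})$ is just the standard $\sigma$-form of that Painlev\'{e}~III$'$ with the parameters $(-4\al,\,-4(2n+1+\al),\,4,\,-4)$ read off from $(\ref{c5})$; then the work collapses to matching normalisations. Either way the difficulty is one of algebraic identification rather than of principle.
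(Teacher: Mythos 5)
Your argument is sound, but it is not what the paper does: the paper gives no proof of this theorem at all --- it is quoted from Chen--Its (equations (3.23)--(3.24) of \cite{ci1}), where the $\sigma$-form is derived from the ladder-operator machinery (auxiliary quantities and their Riccati-type relations), not from (\ref{c7}) and (\ref{c5}). What you propose is a self-contained verification relative to the two results the paper does recall: from (\ref{c7}) and (\ref{c8}) one reads off the closed form (\ref{c10}) (your observation that $H_n=g(t)$, which the paper itself states), and then (\ref{c9}) follows by differentiating twice, eliminating every $X_n''$ via (\ref{c5}), and checking a rational identity in $(t,X_n,X_n')$. This elimination does close, and more cleanly than you fear: setting $u:=t(1+X_n')/(2X_n^2)$ one finds $H_n'=\tfrac12-\tfrac{2n+\al}{2X_n}-u$, $tH_n''=(2n+\al)u+2u^2X_n+\tfrac{\al-X_n}{2}$ and $tH_n'-H_n=\tfrac14(X_n-\al)^2-u^2X_n^2$, after which the difference of the two sides of (\ref{c9}) vanishes identically (using $(2n+\al)^2-4n(n+\al)=\al^2$). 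Note that what you actually verify is a rational-function identity, so your aside about regarding $t,X_n,X_n'$ as ``independent'' needs no separate justification; your caution about not invoking the quartic is well placed; and your Hamiltonian cross-check is essentially how the paper frames the result, (\ref{c9}) being the Jimbo--Miwa--Okamoto $\sigma$-form cited to \cite{JM1982}. What your route buys is an actual derivation, given Theorems 1--2 as stated, in place of a citation; what it costs is independence, since (\ref{c7}) itself is imported from the same ladder-operator analysis in \cite{ci1}.
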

The equation $(\ref{c9})$ is the Jimbo-Miwa-Okamoto $\sigma$-from of ${\rm P_{\uppercase\expandafter{\romannumeral3}}},$
see \cite{JM1982}. From $(\ref{c7})$ and $(\ref{c8})$, we see that
$H_{n}(t)$ maybe expressed in terms of $X_n(t)$ as
\bea\label{c10}
H_{n}=\frac{t}{2}-\frac{1}{4}\left(X_{n}-\alpha\right)^{2}-(n+\frac{\alpha}{2})\frac{t}{X_{n}}-\frac{t^{2}}{4X_{n}^{2}}
+\frac{t^{2}(X'_{n})^{2}}{4X_{n}^{2}}.
\eea
Recall the double scaling process: Sending $n\to\infty$ and $t\to 0^+$ in such a way that $s:=(2n+1+\alpha)t$ is fixed.\\
In the next theorem, we find that it is convenient to introduce a ``potential'' defined by
\begin{equation}\label{0c15}
C(s)=\lim_{n\to\infty}\frac{y_{n}\left(\frac{s}{2n+1+\al}\right)}{\frac{s}{2n+1+\al}}
=\lim_{n\to\infty}\frac{1}{X_n\left(\frac{s}{2n+1+\al}
\right)}.
\end{equation}

\begin{theorem}
Let
\bea
s:=(2n+1+\al)t, \nonumber
\eea
where $t\to 0^+$ and $2n+1+\al\to\infty$, and such that $s$ is finite. Let
\bea\label{c15}
\Delta(s,\al):=\lim_{n\to\infty}\frac{D_n\left(\frac{s}{2n+1+\al},\alpha\right)}{D_n\left(0,\alpha\right)}, \quad  {\rm where}
 \quad \Delta(0,\al)=1.
\eea
The $C$ potential,  satisfies
\bea\label{c16}
C''(s)=\frac{(C'(s))^2}{C}-\frac{C'(s)}{s}+\frac{(C(s))^2}{s}+\frac{\alpha}{s^2}-\frac{1}{s^2\:C(s)},
\eea
with the initial condition
$
C(0)=1/\alpha.
$
If
\bea
{\cal H}(s):=\lim_{n\to\infty}H_{n}(s/(2n+1+\alpha))=s\frac{d}{ds}\ln\Delta(s,\al), \nonumber
\eea
then ${\cal H}(s)$ satisfies
\bea\label{c18}
(s\cH'')^2+4(\cH')^2\left(s\:\cH'-\cH\right)-\left(\al\:\cH'+\frac{1}{2}\right)^{2}=0,
\eea
subject to the initial condition, ${\cal H}(0)=0.$
Moreover,
\bea\label{c19}
{\cal H}(s)=\frac{1}{4}\left(\frac{sC'(s)}{C(s)}\right)^2-\frac{sC(s)}{2}-\frac{1}{4}\left(\frac{1}{C(s)}-\alpha\right)^2.
\eea
\end{theorem}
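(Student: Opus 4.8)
The plan is to obtain all four assertions --- the equation $(\ref{c16})$ for $C$ with $C(0)=1/\al$, the identity $\cH(s)=s\tfrac{d}{ds}\ln\Delta(s,\al)$, the equation $(\ref{c18})$ for $\cH$ with $\cH(0)=0$, and the relation $(\ref{c19})$ --- by substituting $t=s/(2n+1+\al)$ into the finite-$n$ identities $(\ref{c2})$, $(\ref{c5})$, $(\ref{c9})$, $(\ref{c10})$ and passing to the limit $n\to\infty$ with $s$ held fixed, using the definitions $(\ref{0c15})$ and $(\ref{c15})$. Write $N:=2n+1+\al$ and $C_n(s):=1/X_n(s/N)$, so that $(\ref{0c15})$ reads $C_n(s)\to C(s)$ and also $y_n(s/N)=\tfrac{s}{N}X_n(s/N)^{-1}=\tfrac{s}{N}\,C_n(s)$; for any $F=F(t)$ the chain rule gives $F'(s/N)=N\tfrac{d}{ds}F(s/N)$ and $F''(s/N)=N^2\tfrac{d^2}{ds^2}F(s/N)$, which controls the bookkeeping in $N$.

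\textbf{The equation $(\ref{c16})$.} Substitute $t=s/N$ and $y_n(s/N)=\tfrac{s}{N}C_n(s)$ into $(\ref{c2})$; the chain-rule relations give $y_n'(s/N)=C_n+sC_n'$ and $y_n''(s/N)=N(2C_n'+sC_n'')$, and after dividing by $N$ the identity becomes
\begin{equation*}
2C_n' + sC_n'' = \frac{(C_n+sC_n')^2}{sC_n} - \frac{C_n+sC_n'}{s} + C_n^2 + \frac{sC_n^3}{N^2} + \frac{\al}{s} - \frac{1}{sC_n}.
\end{equation*}
The essential feature is that the factor $2n+1+\al$ multiplying $y_n^2/t^2$ in $(\ref{c2})$ is exactly absorbed by the rescaling, producing the finite term $C_n^2$, while the cubic term $y_n^3/t^2$ and the term $1/y_n$ contribute only at order $N^{-2}$. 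Letting $N\to\infty$, so that $C_n\to C$, $C_n'\to C'$, $C_n''\to C''$, and expanding $\tfrac{(C+sC')^2}{sC}=\tfrac{C}{s}+2C'+\tfrac{s(C')^2}{C}$, reduces it to $(\ref{c16})$. The initial value follows from $(\ref{c3})$: since $\lim_{s\to 0^+}\tfrac{N}{s}\,y_n(s/N)=y_n'(0)=1/\al$ for each $n$, we get $C(0)=1/\al$ --- equivalently, $X_n(0)=\al$ in $(\ref{0c15})$. One may equally pass to the limit in $(\ref{c5})$, obtaining the companion equation satisfied by $1/C$.

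\textbf{The relations $(\ref{c19})$ and $(\ref{c18})$.} For $(\ref{c19})$, put $t=s/N$ in $(\ref{c10})$ and use $n+\tfrac{\al}{2}=\tfrac{N-1}{2}$. Rewriting its last three terms as $(n+\tfrac{\al}{2})\tfrac{t}{X_n}$, $\tfrac14\bigl(\tfrac{t}{X_n}\bigr)^2$ and $\tfrac14\bigl(\tfrac{tX_n'}{X_n}\bigr)^2$, at $t=s/N$ one has $\tfrac{t}{X_n}=\tfrac{s}{N}C_n\to 0$ (so the second term disappears), $(n+\tfrac{\al}{2})\tfrac{t}{X_n}=\tfrac{(N-1)s}{2N}C_n\to\tfrac{s}{2}C$, and $\tfrac{tX_n'}{X_n}=-\tfrac{sC_n'}{C_n}\to-\tfrac{sC'}{C}$ (since $X_n(s/N)=1/C_n$); together with $\tfrac{t}{2}\to 0$ and $\tfrac14\bigl(X_n(s/N)-\al\bigr)^2\to\tfrac14(1/C-\al)^2$ this yields $(\ref{c19})$. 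Next, $(\ref{c8})$ gives $H_n(s/N)=s\tfrac{d}{ds}\ln\tfrac{D_n(s/N,\al)}{D_n(0,\al)}$, so in the limit $\cH(s)=s\tfrac{d}{ds}\ln\Delta(s,\al)$, whence $\Delta(s,\al)=\exp\left(\int_{0}^{s}\frac{\cH(\xi)}{\xi}\,d\xi\right)$ and $\cH(0)=0$. Finally, for $(\ref{c18})$, substitute $t=s/N$ in the $\sigma$-form $(\ref{c9})$, writing $\eta:=H_n(s/N)$, so that $H_n'(s/N)=N\eta'$, $tH_n'=s\eta'$, $tH_n''=sN\eta''$, and using $n=\tfrac{N-1-\al}{2}$, $n(n+\al)=\tfrac{(N-1)^2-\al^2}{4}$. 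Expanding the right-hand side, the two leading pieces --- the $N^2(N-1)^2(\eta')^2$ coming from $[n-(2n+\al)H_n']^2$ and the $-[(N-1)^2-\al^2]N^2(\eta')^2$ coming from $-4[n(n+\al)+tH_n'-H_n]H_n'(H_n'-1)$ --- collapse to the milder $\al^2N^2(\eta')^2$; dividing the whole identity by $N^2$ gives
\begin{equation*}
s^2(\eta'')^2 = \al^2(\eta')^2 + \frac{\al(N-1-\al)}{N}\,\eta' + \frac{(N-1-\al)^2}{4N^2} - 4(\eta')^2(s\eta'-\eta) + \frac{4\eta'(s\eta'-\eta)}{N}.
\end{equation*}
Sending $N\to\infty$ (the $N$-dependent coefficients tending to $\al$ and $\tfrac14$, the last term vanishing) and using $\al^2(\cH')^2+\al\cH'+\tfrac14=(\al\cH'+\tfrac12)^2$, one recovers $(\ref{c18})$. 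As an internal check, $(\ref{c18})$ can also be obtained by differentiating $(\ref{c19})$ once and eliminating $C''$ with $(\ref{c16})$.

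\textbf{Main obstacle.} The delicate point throughout is the legitimacy of the termwise limits: the computations require not merely $X_n(s/N)\to 1/C(s)$ and $H_n(s/N)\to\cH(s)$ pointwise, but convergence of the first and second $s$-derivatives, uniformly on compact $s$-intervals including up to $s=0$. The plan is first to establish the uniform convergence underlying $(\ref{0c15})$ and $(\ref{c15})$ --- controlling the solution of $(\ref{c5})$ near $t=0$ via the boundary value $X_n(0)=\al$ --- and then to bootstrap the derivatives from the rescaled equations themselves: once $C_n\to C$ and $C_n'\to C'$ uniformly on compacts, the rescaled form of $(\ref{c5})$ determines $C_n''$ up to $O(N^{-2})$ and forces $C_n''\to C''$, while $C_n'\to C'$ follows from an equicontinuity (Arzel\`a--Ascoli) argument together with uniqueness of the solution of $(\ref{c16})$ satisfying $C(0)=1/\al$. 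Making the step $C_n'\to C'$ fully rigorous is the technical heart; within the present formal double-scaling analysis it may reasonably be taken for granted, the mutual compatibility of $(\ref{c16})$, $(\ref{c19})$ and $(\ref{c18})$ serving as an a posteriori confirmation.
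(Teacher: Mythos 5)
Your proposal is correct and follows essentially the same route as the paper: the paper's proof simply declares that, after the substitution $t=s/(2n+1+\al)$, the finite-$n$ relations (\ref{c5}), (\ref{c9}) and (\ref{c10}) formally become (\ref{c16}), (\ref{c18}) and (\ref{c19}), and your computation carries out exactly that rescaling (working from (\ref{c2}), which is equivalent to (\ref{c5}) via $y_n=t/X_n$), with the correct bookkeeping in $N=2n+1+\al$. Your closing remarks on the legitimacy of termwise limits address a point the paper itself leaves at the level of a "straightforward, formal" computation, so no discrepancy arises.
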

\begin{proof}
By a straight forward, formal, if tedious computations, we see that (\ref{c5}) becomes (\ref{c16}),
 (\ref{c9}) becomes (\ref{c18}), and (\ref{c10}) becomes (\ref{c19}), after double scaling.
\end{proof}

\begin{remark}
Form (\ref{c7}) and (\ref{c15}), we have,
\bea\label{cc1}
\ln{\Delta(s,\alpha)}=\int_{0}^{s}\left\{\frac{1}{4}\left(\frac{\xi}{C(\xi)}\:\frac{dC(\xi)}{d\xi}\right)^2-
\frac{\xi\:C(\xi)}{2}-\frac{1}{4}\left(\frac{1}{C(\xi)}-\al\right)^2\right\}\frac{d\xi}{\xi}.
\eea
\end{remark}
\begin{remark}By a change of variables $Y(x)=\frac{x}{2}C(\frac{x^{2}}{8}),$ $x\in(0,\infty),$ the equation $(\ref{c16})$, becomes
a ``lesser" $P_{III};$
\bea\label{c21}
Y''=\frac{(Y')^{2}}{Y}-\frac{Y'}{x}+\frac{Y^{2}}{x}-\frac{1}{Y}+\frac{2\alpha}{x}.
\eea
\\
From \cite{KSO2006}, we see that $(\ref{c21})$ is ${\rm P_{III}(1, 2\alpha, 0, -1)}$;
 the ${\rm P_{III}}$ with a fewer number of parameters, mentioned in the abstract.
\end{remark}
\begin{remark}
Substituting
$$
\mathbb{H}(s)=:{\cal H}(2s)-\frac{\alpha^{2}}{4},
$$
into $(\ref{c18})$, we find,
\bea
\left(s\mathbb{H}''(s)\right)^{2}+4\mathbb{H}'^{2}(s)\left(s\mathbb{H}'(s)-\mathbb{H}(s)\right)-2\alpha\:{\mathbb{H}'(s)}-1=0, \nonumber
\eea
which is the ${\rm P}_{III}$ obtained by Ohyama-Kawamuko-Sakai-Okamoto, see ($(18)$, in \cite{KSO2006}), but with $\alpha_1$ replaced
by $\alpha.$
\end{remark}

\subsection{Coulomb Fluid continued.}
\par
Recall equation $(\ref{c13})$ derived by the Coulomb Fluid method,
$$
\widetilde{X}^4-\al\:\widetilde{X}^3-(2n+\al)t\:\widetilde{X}-t^2=0.
$$
Substituting 
$
t=s/(2n+\al),
$
into quartic, following by $n\to\infty,$ we find,
\bea
\widetilde{X}^3-\alpha\widetilde{X}^2-s=0,\nonumber
\eea
a cubic equation in $\widetilde{X}.$
\par
Let
$$\widetilde{C}=\frac{1}{\widetilde{X}},$$
this cubic becomes,
\bea
\frac{\widetilde{C}^{2}}{s}+\frac{\alpha}{s^{2}}-\frac{1}{s^{2}\widetilde{C}}=0. \nonumber
\eea
\par
Note that this 
is the ``algebraic part" of (\ref{c16}).
\\
\par
Retaining only the real solution of the cubic equation in $\widetilde{C},$ we find,
\bea
\widetilde{C}(s)=-2^{\frac{1}{3}}\:\alpha
\left[27s^{2}+(729s^{4}+108\alpha^{3}s^{3})^{\frac{1}{2}}\right]^{-\frac{1}{3}}+18^{-\frac{1}{3}}s^{-1}
\left[9s^{2}+(81s^{4}+12s^{3}\alpha^{3})^{\frac{1}{2}}\right]^{\frac{1}{3}}. \nonumber
\eea
\par
A Taylor series expansion of $\widetilde{C}(s)$ about $s=0$ gives, for $\alpha>0,$
\bea\label{c45}
\widetilde{C}(s)=\frac{1}{\alpha}-\frac{1}{\alpha^{4}}s+\frac{3}{\alpha^{7}}s^{2}-\frac{12}{\alpha^{10}}s^{3}
+\frac{55}{\alpha^{13}}s^{4}+{\rm O}({s^{5}}).
\eea
\par
For large and positive $s$, one finds
\par
\bea\label{c69}
\widetilde{C}(s)=s^{-\frac{1}{3}}-\frac{\alpha}{3}s^{-\frac{2}{3}}+\frac{\alpha^{3}}{81}s^{-\frac{4}{3}}
+\frac{\alpha^{4}}{243}s^{-\frac{5}{3}}-\frac{4\alpha^{6}}{6561}s^{-\frac{7}{3}}
+{\rm O}({s^{-\frac{8}{3}}}).
\eea
\par
In the next section, we derive the  small $s$ and large $s$ expansion of the $C$ potential, assuming
appropriate formS of the expansions.
\section{Small $s$ and large $s$ behavior of the $C$ potential.}
\par
We consider real-valued solutions of the Painlev\'{e} equations throughout this paper.
\\
We study the solution of the $C$ potential for $s \rightarrow 0^{+}$, by substituting
\bea
C(s)=\sum_{j=0}^{\infty}a_{j}s^{j},\nonumber
\eea
into (\ref{c16}). We find $a_{0}=1/\alpha,$ and $a_1=-1/(\alpha^2(\alpha^2-1)).$
For $n\geq2$, the coefficients satisfy a recurrence relation,
\begin{align*}
\sum_{j=0}^{n}j(j-1)a_{j}a_{n-j}-\sum_{j=0}^{n}j(n-j)a_{j}a_{n-j}+\sum_{j=0}^{n}ja_{j}a_{n-j}
-\sum_{j=0}^{n-1}\sum_{k=0}^{n-1-j}a_{j}a_{k}a_{n-1-j-k}-\alpha{a_{n}}=0.
\end{align*}
\\
A straightforward computation, gives,
\begin{align}\label{c27}
C(s)=&\frac{1}{\alpha}-\frac{1}{\alpha^{2}(\alpha^{2}-1)}s+\frac{3}{\alpha^{3}(\alpha^{2}-1)(\alpha^{2}-4)}s^{2}
-\frac{6(2\alpha^{2}-3)}{\alpha^{4}(\alpha^{2}-1)^{2}(\alpha^{2}-4)(\alpha^{2}-9)}s^{3}\nonumber\\
&+\frac{5(-36+11\alpha^{2})}{\alpha^{5}(\alpha^{2}-1)^{2}(\alpha^{2}-4)(\alpha^{2}-9)(\alpha^{2}-16)}s^{4}\nonumber\\
&+\frac{3(3600-4219\alpha^{2}+1115\alpha^{4}-91\alpha^{6})}
{\alpha^{6}(\alpha^{2}-1)^{3}(\alpha^{2}-4)^{2}(\alpha^{2}-9)(\alpha^{2}-16)(\alpha^{2}-25)}s^{5}+{\rm O}({s^{6}}),
\end{align}
where $\alpha \notin \mathbb{Z}$.
\par
For large and positive $s$, we substitute the asymptotic expansion
\bea
C(s)=\sum_{k=1}^{\infty}b_{k}s^{-\frac{k}{3}}, \nonumber
\eea
into (\ref{c16}), and find ${b_j:\:j=1,2,...,n}$ satisfies
\begin{align*}
\quad &\frac{1}{9}\sum_{j=1}^{n}j(2j+3-n)b_{j}b_{n-j}-\frac{1}{3}\sum_{j=1}^{n}jb_{j}b_{n-j}
-\sum_{j=1}^{n+3}\sum_{k=1}^{n+3-j}b_{j}b_{k}b_{n+3-j-k}-\alpha{b_{n}}=0, \;\;\;n\geq 2,
\end{align*}
$$
{\rm with} \quad 1-b_{1}^{3}=0, \;{\rm and\;}\quad \sum_{j=1}^{4}\sum_{k=1}^{4-j}b_{j}b_{k}b_{4-j-k}+\alpha{b_{1}}=0.
$$
Continuing,
\begin{align}\label{c29}
C(s)=&s^{-\frac{1}{3}}-\frac{\alpha}{3}s^{-\frac{2}{3}}+\frac{\alpha(\alpha^{2}-1)}{81}s^{-\frac{4}{3}}
+\frac{\alpha^{2}(\alpha^{2}-1)}{243}s^{-\frac{5}{3}}+\frac{\alpha(\alpha^{2}-1)}{243}s^{-2}\nonumber\\
&-\frac{2\alpha^{2}(\alpha^{2}-1)(2\alpha^{2}-11)}{6561}s^{-\frac{7}{3}}
-\frac{5\alpha(\alpha^{2}-1)(\alpha^{4}-\alpha^{2}-15)}{19683}s^{-\frac{8}{3}}+{\rm O}({s^{-3}}).
\end{align}
\par
\begin{remark}
Note that for $\al=0,\:\pm 1,$ the
asymptotic expansion terminates. The three relevant algebraic solutions for the $C$ potential are,
\begin{align*}
&C(s)=s^{-\frac{1}{3}},\quad\al=0\nn\\
&C(s)=s^{-1/3}\mp\frac{1}{3}s^{-\frac{2}{3}},\quad\al=\pm1.\nn\\
\end{align*}
\end{remark}
\begin{remark}
The series expansion for the $C$ potential, given in (\ref{c27}), for sufficiently
large $|\alpha|$ is the same as the corresponding series for $\widetilde{C}$ derived by Coulomb Fluid method.
This phenomenon also holds for the large $s$ asymptotic, seen by comparing
(\ref{c29}) with (\ref{c69}).
\end{remark}

\subsection{The behavior of $\cH(s)$ for small $s$ and large $s.$}
Recall {\color{blue}} the $\sigma$-form of our Painlev\'{e} equation,
$$
(s\cH'')^2+4(\cH')^2\left(s\:\cH'-\cH\right)-\left(\al\:\cH'+\frac{1}{2}\right)^{2}=0,
$$
with the initial condition, ${\cal H}(0)=0.$\\
For small $s$, we assume a power series expansion in $s$,
\bea
\cH(s)=\sum_{j=0}^{\infty}d_{j}s^{j}. \nonumber
\eea
Substituting this into the equation satisfied by $\cH(s),$ followed by some computation, gives
\begin{align}\label{c31}
\cH(s)=&-\frac{1}{2\alpha}s+\frac{1}{4\alpha^{2}
(\alpha^{2}-1)}s^{2}-\frac{1}{2\alpha^{3}(\alpha^{2}-1)(\alpha^{2}-4)}s^{3}
+\frac{3(2\alpha^{2}-3)}{4\alpha^{4}(\alpha^{2}-1)^{2}(\alpha^{2}-4)(\alpha^{2}-9)}s^{4}\nn\\
&+\frac{-36+11\alpha^{2}}{2\alpha^{5}(\alpha^{2}-1)^{2}(\alpha^{2}-4)(\alpha^{2}-9)(\alpha^{2}-16)}s^{5}\nn\\
&+\frac{-3600+4219\alpha^{2}-1115\alpha^{4}+91\alpha^{6}}{4\alpha^{6}(\alpha^{2}-1)^{3}
(\alpha^{2}-4)^{2}(\alpha^{2}-9)(\alpha^{2}-16)(\alpha^{2}-25)}s^{6}+{\rm O}({s^{7}}),
\end{align}
where $\alpha \notin \mathbb{Z}$.
\par
For positive and large $s$, we substitute the  expansion
\bea
\cH(s)=s^{\frac{2}{3}}\left(\sum_{j=0}^{\infty}\eta_{j}s^{-\frac{j}{3}}\right), \nonumber
\eea
into the $\sigma$-form equation. After some computations, the expansions of ${\cal H}(s),$ for large and positive $s$ read,
\begin{align}\label{c33}
\cH(s)=&-\frac{3}{4}s^{\frac{2}{3}}+\frac{\alpha}{2}s^{\frac{1}{3}}+\frac{1-6\alpha^{2}}{36}+\frac{\alpha(\alpha^{2}-1)}{54}s^{-\frac{1}{3}}
+\frac{\alpha^{2}(\alpha^{2}-1)}{324}s^{-\frac{2}{3}}
+\frac{\alpha(\alpha^{2}-1)}{486}s^{-1}\nonumber\\
&-\frac{\alpha^{2}(\alpha^2-1)(2\alpha^{2}-11)}{8748}s^{-\frac{4}{3}}-\frac{\alpha(\alpha^{6}-2\alpha^{4}
-14\alpha^{2}+15)}{13122}s^{-\frac{5}{3}}\nonumber\\
&-\frac{8\alpha^{6}-41\alpha^{4}+33\alpha^{2}}{26244}s^{-2}+{\rm O}({s^{-\frac{7}{3}}}).
\end{align}
The next Theorem gives the small $s$ and large $s$ expansions of $\Delta(s,\alpha).$
\begin{theorem}
In the double scaling $n\rightarrow\infty$, $t\rightarrow 0$, such that $s:=(2n+1+\al)t$ and $s\in(0,\infty)$,
the expressions of $\Delta(s,\alpha),$  for small $s$ and large $s$ are as follows:
\par
For small $s$,
\begin{align}\label{c41}
&\Delta(s,\al)=\exp\left[-\frac{1}{2\alpha}s+\frac{1}{8\alpha^{2}(\alpha^{2}-1)}s^{2}
-\frac{1}{6\alpha^{3}(\alpha^{2}-1)(\alpha^{2}-4)}s^{3}\right.\nonumber\\
&+\frac{3(2\alpha^{2}-3)}{16\alpha^{4}(\alpha^{2}-1)^{2}(\alpha^{2}-4)(\alpha^{2}-9)}s^{4}
+\frac{36-11\alpha^{2}}{10\alpha^{5}(\alpha^{2}-1)^{2}(\alpha^{2}-4)(\alpha^{2}-9)(\alpha^{2}-16)}s^{5}\nonumber\\
&\left.+\frac{91\alpha^{6}-1115\alpha^{4}+4219\alpha^{2}-3600}{24\alpha^{6}
(\alpha^{2}-4)^{2}(\alpha^{2}-1)^{3}(\alpha^{2}-9)(\alpha^{2}-16)(\alpha^{2}-25)}s^{6}+{\rm O}(s^{7})\right],
\end{align}
where $\alpha \notin \mathbb{Z}$.
\par
For large $s$,
\begin{align}\label{b30}
\Delta(s,\al)=&\exp\left[c_{1}-\frac{9}{8}s^{\frac{2}{3}}+\frac{3\alpha}{2}s^{\frac{1}{3}}
+\frac{1-6\alpha^{2}}{36}\ln{s}+\frac{\alpha(1-\alpha^{2})}{18}s^{-\frac{1}{3}}
+\frac{\alpha^{2}(1-\alpha^{2})}{216}s^{-\frac{2}{3}}\right.\nonumber\\
&+\frac{\alpha(1-\alpha^{2})}{486}s^{-1}
+\frac{\alpha^{2}(2\alpha^{4}-13\alpha^{2}+11)}{11664}s^{-\frac{4}{3}}
+\frac{\alpha(\alpha^{6}-2\alpha^{4}-14\alpha^{2}+15)}{21870}s^{-\frac{5}{3}}\nonumber\\
&\left.+{\rm O}(s^{-3})\right],
\end{align}
 where $c_{1}=c_{1}(\alpha)$ is a constant independent of $s$.
\end{theorem}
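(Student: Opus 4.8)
\medskip
\noindent\textbf{Proof strategy.}
The plan is to reduce the whole statement to a single quadrature. Combining Remark~1, equation~(\ref{cc1}), with the expression (\ref{c19}) for $\cH$ in terms of $C$, one obtains the compact formula
\bea
\ln\Delta(s,\al)=\int_{0}^{s}\frac{\cH(\xi)}{\xi}\:d\xi, \nonumber
\eea
which is legitimate because $\Delta(0,\al)=1$ (so the lower limit is correct) and because the initial condition $\cH(0)=0$ together with the small-$s$ behaviour forces $\cH(\xi)=\mathrm{O}(\xi)$ as $\xi\to 0^{+}$, making the integrand integrable at the origin. Thus the theorem follows by inserting the two expansions of the $\sigma$-function $\cH$ already obtained --- (\ref{c31}) for $s\to0^{+}$ and (\ref{c33}) for $s\to+\infty$ --- and integrating term by term. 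For the small-$s$ formula (\ref{c41}) I would simply write $\cH(\xi)=\sum_{j\ge1}d_{j}\xi^{j}$ as in (\ref{c31}), integrate against $d\xi/\xi$ to get $\ln\Delta(s,\al)=\sum_{j\ge1}(d_{j}/j)\,s^{j}$, exponentiate, and read off the coefficients; the restriction $\al\notin\Z$ is inherited verbatim from the denominators in (\ref{c31}). This step is purely mechanical.

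For the large-$s$ formula (\ref{b30}) the one additional manoeuvre is to split the range of integration at a fixed reference point $s_{0}>0$,
\bea
\ln\Delta(s,\al)=\int_{0}^{s_{0}}\frac{\cH(\xi)}{\xi}\:d\xi+\int_{s_{0}}^{s}\frac{\cH(\xi)}{\xi}\:d\xi, \nonumber
\eea
the first integral being a finite constant depending only on $\al$. On $[s_{0},s]$ one substitutes $\cH(\xi)=\xi^{2/3}\sum_{j\ge0}\eta_{j}\xi^{-j/3}$ from (\ref{c33}): the coefficient of $\xi^{0}$, namely $\eta_{2}=(1-6\al^{2})/36$, integrates against $d\xi/\xi$ to the logarithmic term $\tfrac{1-6\al^{2}}{36}\ln s$ of (\ref{b30}); every other term $\eta_{j}\xi^{(2-j)/3}$ contributes $\tfrac{3\eta_{j}}{2-j}s^{(2-j)/3}$; and all the boundary contributions from $s_{0}$, together with $\int_{0}^{s_{0}}\cH(\xi)\,d\xi/\xi$, combine into the single constant $c_{1}=c_{1}(\al)$. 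Matching the remaining powers of $s$ with the coefficients listed in (\ref{c33}) then reproduces (\ref{b30}).

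The genuine obstacle is the constant $c_{1}(\al)$ itself. The argument above only identifies it as a global functional of the transcendent $\cH$ on $(0,\infty)$ --- an integral from $0$ to $s_{0}$ plus boundary terms --- i.e. it is a connection-type constant that the purely local expansions cannot pin down. I would therefore leave $c_{1}(\al)$ undetermined at this point and fix it separately, via the asymptotics of $P_{n}(0;t,\al)$ and the Barnes-$G$ evaluation of $D_{n}(0,\al)$, as carried out in the next section. A secondary, more technical issue is that (\ref{c31}) and (\ref{c33}) are produced as \emph{formal} series solutions of the $\sigma$-form (\ref{c18}); to upgrade the above to a fully rigorous asymptotic statement one should also verify that these formal series are genuine asymptotic expansions of the relevant real solution, which can be argued by taking the double-scaling limit of the finite-$n$ quantities in Theorems~2--4.
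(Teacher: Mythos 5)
Your proposal is correct and essentially the paper's own argument: the paper proves the theorem by substituting the expansions (\ref{c27}) and (\ref{c29}) of $C$ into the quadrature (\ref{cc1}) --- which, by (\ref{c19}), is exactly your formula $\ln\Delta(s,\al)=\int_0^s\cH(\xi)\,d\xi/\xi$ --- and explicitly notes that this coincides with integrating the $\cH$-expansions (\ref{c31}) and (\ref{c33}) term by term, just as you do. Your treatment of $c_1(\al)$ as an undetermined connection constant, to be identified later from the asymptotics of $P_n(0;t,\al)$, likewise matches the paper, which only conjectures its value in Section 4 and states that a rigorous determination remains open.
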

\begin{proof}
Recall (\ref{cc1}),
\begin{equation*}
\ln{\Delta(s,\alpha)}=\int_{0}^{s}\left\{\frac{1}{4}\left(\frac{\xi}{C(\xi)}\:\frac{dC(\xi)}{d\xi}\right)^2-
\frac{\xi\:C(\xi)}{2}-\frac{1}{4}\left(\frac{1}{C(\xi)}-\al\right)^2\right\}\frac{d\xi}{\xi}.
\end{equation*}
 Substituting (\ref{c27}) and (\ref{c29}) into the above formula, then the expansions of $\Delta(s,\al)$ for small $s,$ (\ref{c41}) and for large $s,$ (\ref{b30}) follow immediately.\\
The results obtained coincide with the integration of
$$
{\cal H}(s)=s\frac{d}{ds}\ln\Delta(s,\al), \quad {\cal H}(0)=0,
$$
with the expansions of ${\cal H}(s)$ for small $s,$ (\ref{c31}) and for large $s,$ (\ref{c33}).
\end{proof}
At the end of this section we compute the large $n$ behavior of $P_n(0;t,\alpha),$ namely the evaluation of the orthogonal polynomials
at the origin, from the fact that,
\begin{equation*}
(-1)^nP_n(0; t,\alpha) = \frac{D_{n}(t, \alpha + 1)}{D_{n}(t, \alpha)}.
\end{equation*}
Note the exact evaluation of $(-1)^nP_n(0;0,\alpha)$ in (\ref{gamma}).
\begin{cor}
Under 
double scaling, 
\begin{align}\label{c67}
\lim\limits_{n \rightarrow \infty}\frac{(-1)^nP_n\left(0; \frac{s}{2n+\alpha+1},\alpha\right)}{(-1)^nP_n\left(0;0,\alpha\right)}&=
\frac{\Delta(s,\alpha+1)}{\Delta(s,\alpha)}
=\exp\left(c_{2}+\frac{3}{2}s^{\frac{1}{3}}-\frac{1+2\alpha}{6}\ln{s}-\frac{\alpha(\alpha+1)}{6}s^{-\frac{1}{3}}\right.\nonumber\\
&\left.-\frac{\alpha(\alpha+1)(2\alpha+1)}{108}s^{-\frac{2}{3}}-\frac{\alpha(\alpha+1)}{162}s^{-1}\right.\nonumber\\
&\left.+\frac{\alpha(\alpha+1)(2\alpha+1)(\alpha^{2}+\alpha-3)}{1944}s^{-\frac{4}{3}}+{\rm O}(s^{-\frac{5}{3}})\right),
\end{align}
where $c_{2}=c_{2}(\alpha)$ is a constant independent of $s$.
\end{cor}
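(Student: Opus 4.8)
The plan is to combine the exact finite-$n$ identity $(\ref{alpha})$, the definition $(\ref{c15})$ of the double-scaled determinant, and the large-$s$ expansion $(\ref{b30})$ established in the previous Theorem. First I would express the left-hand side of $(\ref{c67})$ purely through Hankel determinants: setting $t=s/(2n+\alpha+1)$ and applying $(\ref{alpha})$ at both $t$ and $0$,
\[
\frac{(-1)^nP_n(0;t,\alpha)}{(-1)^nP_n(0;0,\alpha)}
=\frac{D_n(t,\alpha+1)/D_n(t,\alpha)}{D_n(0,\alpha+1)/D_n(0,\alpha)}
=\frac{D_n(t,\alpha+1)/D_n(0,\alpha+1)}{D_n(t,\alpha)/D_n(0,\alpha)}.
\]
By $(\ref{c15})$ the denominator tends to $\Delta(s,\alpha)$ as $n\to\infty$. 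For the numerator, observe that at parameter $\alpha+1$ the scaling variable naturally attached to this $t$ is $(2n+1+(\alpha+1))t=s\cdot\frac{2n+\alpha+2}{2n+\alpha+1}$, which tends to $s$ while $t\to0^+$; hence $D_n(t,\alpha+1)/D_n(0,\alpha+1)\to\Delta(s,\alpha+1)$, and the first equality in $(\ref{c67})$ follows.

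It then remains to evaluate $\Delta(s,\alpha+1)/\Delta(s,\alpha)$ from $(\ref{b30})$. Writing $(\ref{b30})$ schematically as $\Delta(s,\alpha)=\exp\big[\sum_k e_k(\alpha)\,g_k(s)\big]$, where $g_k(s)$ runs over the fixed functions $s^{2/3},s^{1/3},\ln s,s^{-1/3},\dots$ and $e_k(\alpha)$ are the coefficients displayed there, the ratio is $\exp\big[\sum_k(e_k(\alpha+1)-e_k(\alpha))g_k(s)\big]$. The leading term $-\tfrac98 s^{2/3}$ is $\alpha$-independent and drops out; the $s^{1/3}$ coefficient shifts by $\tfrac{3(\alpha+1)}{2}-\tfrac{3\alpha}{2}=\tfrac32$; the $\ln s$ coefficient shifts by $\tfrac{1-6(\alpha+1)^2-(1-6\alpha^2)}{36}=-\tfrac{1+2\alpha}{6}$; the $s^{-1/3}$ coefficient shifts by $\tfrac{(\alpha+1)(1-(\alpha+1)^2)-\alpha(1-\alpha^2)}{18}=-\tfrac{\alpha(\alpha+1)}{6}$; and the remaining coefficients shift by the amounts recorded in $(\ref{c67})$ (for instance the $s^{-2/3}$ and $s^{-4/3}$ shifts reduce, after factoring out $\alpha(\alpha+1)$, to $-\tfrac{\alpha(\alpha+1)(2\alpha+1)}{108}$ and $\tfrac{\alpha(\alpha+1)(2\alpha+1)(\alpha^2+\alpha-3)}{1944}$ respectively), while the additive constants combine into $c_2=c_1(\alpha+1)-c_1(\alpha)$, independent of $s$. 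Matching these term by term with $(\ref{c67})$ completes the argument; the analogous small-$s$ expansion of the ratio, if wanted, follows the same way from $(\ref{c41})$.

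The only non-mechanical point — hence the main obstacle — is the justification that $D_n(s/(2n+\alpha+1),\alpha+1)/D_n(0,\alpha+1)\to\Delta(s,\alpha+1)$ even though $2n+\alpha+1$ rather than $2n+\alpha+2=2n+1+(\alpha+1)$ has been used in the scaling. This amounts to showing that the $\mathrm{O}(1/n)$ discrepancy in the effective scaling variable is harmless, which follows from the convergence asserted in $(\ref{c15})$ together with continuity of $\Delta(\cdot,\alpha+1)$ in its first argument, and is entirely consistent with the formal level at which the double-scaling limits are carried out elsewhere in the paper. Everything after this reduction is the bookkeeping of subtracting two copies of the asymptotic series $(\ref{b30})$.
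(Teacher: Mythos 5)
Your argument is correct and follows essentially the same route as the paper: apply \eqref{alpha} at $t=s/(2n+\alpha+1)$ and at $t=0$ to rewrite the ratio as $\lim_n \frac{D_n(t,\alpha+1)/D_n(0,\alpha+1)}{D_n(t,\alpha)/D_n(0,\alpha)}=\Delta(s,\alpha+1)/\Delta(s,\alpha)$, then subtract the two copies of the large-$s$ expansion \eqref{b30} term by term, with $c_2(\alpha)=c_1(\alpha+1)-c_1(\alpha)$ exactly as in \eqref{rr1}. Your extra remark about the ${\rm O}(1/n)$ mismatch between $2n+\alpha+1$ and $2n+1+(\alpha+1)$ in the scaling variable is a point the paper passes over silently, and your treatment of it is consistent with the formal level of rigor used there.
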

\begin{proof}
From the fact that
\begin{align*}
\lim\limits_{n \rightarrow \infty}\frac{(-1)^nP_n\left(0; \frac{s}{2n+\alpha+1},\alpha\right)}
{(-1)^nP_n\left(0;0,\alpha\right)}&=\lim_{n \rightarrow \infty}\frac{D_{n}(s/(2n+\alpha+1), \alpha + 1)}
{D_{n}(0,\alpha + 1)}\frac{D_{n}(0,\alpha)}{D_{n}(s/(2n+\alpha+1), \alpha)}\\
&=\frac{\Delta(s,\alpha+1)}{\Delta(s,\alpha)}.
\end{align*}
Together with the expression of $\Delta(s,\alpha+1)$ and $\Delta(s,\alpha)$ given by
(\ref{b30}); the equation (\ref{c67}) is obtained. Furthermore, there is
a relation between $c_{1}(\alpha)$ and $c_{2}(\alpha)$, namely,
\begin{equation}\label{rr1}
c_{2}(\alpha)=c_{1}(\alpha+1)-c_{1}(\alpha).
\end{equation}
\end{proof}
In the next section a computation produces the constant $c_2(\alpha).$
\section{The asymptotic of $P_{n}(0; t, \alpha)$.}
In this section, we evaluate $P_n(0; t, \alpha)$, for large $n,$ and give a derivation of $c_{2}(\alpha)$.
\\
First we state a result regarding the large $n$ behaviour of the orthogonal polynomials $P_{n}(z; t,\alpha)$, for $z\notin{[a,b]}.$
In our problem, the potential, ${\rm v}(x)=-\alpha\ln{x}+x+\frac{t}{x}, x \geq 0, t>0$ and $\alpha>0$, satisfy
the convexity condition\cite{YN1998}.

The formulas below are valid for $z\notin [a,b],$ and large $n.$
\par
For large $n,$ $P_{n}(z)$ can be computed as
\bea\label{c52}
P_{n}(z) \sim \exp[-S_{1}(z)-S_{2}(z)], \quad {\rm where} \quad z \notin [a,b],
    \eea
and  $S_{1}(z)$ and $S_{2}(z)$ are given by ($(4.6)$ and $(4.7)$, \cite{YN1998}).

These are
\begin{align*}
S_{1}(z)=\frac{1}{4}\ln\left[\frac{16(z-a)(z-b)}{(b-a)^{2}}\left(\frac{\sqrt{z-a}-\sqrt{z-b}}{\sqrt{z-a}+\sqrt{z-b}}\right)^{2}\right],
\quad z \notin [a,b],
\end{align*}
and
\begin{align}\label{c54}
S_{2}(z)=-n\ln&\left(\frac{\sqrt{z-a}+\sqrt{z-b}}{2}\right)^{2}\nonumber\\
&+\frac{1}{2\pi}\int_{a}^{b}\frac{{\rm v}(x)}{\sqrt{(b-x)(x-a)}}\left[\frac{\sqrt{(z-a)(z-b)}}{x-z}+1\right]dx, \quad z \notin [a,b].
\end{align}
\\
We mention here an equivalent representation of $S_1(z),\;\;z\notin[a,b]$,
\bea\label{c53}
\exp\left(-S_{1}(z)\right)=
\frac{1}{2}\left[\left(\frac{z-b}{z-a}\right)^{\frac{1}{4}}+\left(\frac{z-a}{z-b}\right)^{\frac{1}{4}}\right].
\eea
The next theorem gives an evaluation of $P_n(0;t,\alpha)$ for large $n.$
\\
\begin{theorem}\label{s1s2}
If ${\rm v}(x)=-\alpha\ln{x}+x+\frac{t}{x}, x \geq 0, t>0$ and $\alpha>0$, the evaluations
at $z=0$  of $S_{1}(z;t,\alpha)$, $S_{2}(z;t,\alpha)$,
and $P_n(z;t,\alpha)$ are given by
\bea\label{c55}
\exp[-S_{1}(0; t, \alpha)] \sim (2t)^{-\frac{1}{6}}n^{\frac{1}{3}},
\eea
\bea\label{c56}
(-1)^{n}\exp[-S_{2}(0; t, \alpha)] \sim n^{n}(2t)^{-\frac{\alpha}{3}}
\exp\left(-n+3\cdot2^{-\frac{2}{3}}n^{\frac{1}{3}}t^{\frac{1}{3}}+\frac{2\alpha}{3}\ln{n}\right),
\eea
and
\begin{align}\label{c57}
(-1)^{n}P_{n}(0; t, \alpha) &\sim (-1)^{n}\exp[-S_{1}(0; t, \alpha)-S_{2}(0; t, \alpha)]\nonumber\\
 &\sim n^{n}(2t)^{-\frac{1}{6}-\frac{\alpha}{3}}\exp\left(-n+3\cdot2^{-\frac{2}{3}}n^{\frac{1}{3}}t^{\frac{1}{3}}
 +\frac{1}{3}(1+2\alpha)\ln{n}\right),
\end{align}
furthermore, the asymptotic estimates are uniform with respect to\\
 $t\in(0, a_{0}], 0<a_{0}<\infty $,
$\alpha>0$, $n\rightarrow\infty$ and $nt\rightarrow\infty$.
\end{theorem}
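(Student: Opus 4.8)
The plan is to evaluate the large-$n$ representation \ref{c52}, $P_n(z)\sim\exp[-S_1(z)-S_2(z)]$, at the single point $z=0$, to carry out the resulting finite-$n$ integrals \emph{exactly} in terms of the soft edges $a,b$, and only afterwards to feed in the large-$n$ behaviour of $a,b$. First I would extract from \ref{c11}, \ref{c12} and the quartic \ref{c13} everything needed about $a,b$ and $\widetilde X:=\sqrt{ab}$ in the range $t\in(0,a_0]$, $n\to\infty$, $nt\to\infty$. Two consequences are exact and will be used verbatim: from \ref{c11},
\[
\frac{(\sqrt a+\sqrt b)^2}{4}=\frac{a+b+2\sqrt{ab}}{4}=n+\frac{\alpha}{2}+\frac{\widetilde X}{2}+\frac{t}{2\widetilde X},
\]
and from \ref{c12}, $\frac{t(a+b)}{2ab}=\widetilde X-\alpha$. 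Rewriting \ref{c13} as $\widetilde X^3=\alpha\widetilde X^2+(2n+\alpha)t+t^2/\widetilde X$ gives $\widetilde X=(2nt)^{1/3}+\frac{\alpha}{3}+o(1)$, hence $b=4n+{\rm O}(1)$ and $a=\widetilde X^2/b=2^{-4/3}t^{2/3}n^{-1/3}(1+o(1))$; in particular $\widetilde X/n$, $\widetilde X^2/n$, $a/\widetilde X$ and $t/\widetilde X$ all tend to $0$, and it is precisely the hypothesis $nt\to\infty$ that forces $\widetilde X\to\infty$, i.e.\ that places us in the ``large $s$'' branch. The $S_1$ evaluation is then immediate: putting $z=0$ in \ref{c53} and using $\sqrt{-a}=i\sqrt a$, $\sqrt{-b}=i\sqrt b$ one finds the closed form $\exp[-S_1(0;t,\alpha)]=\frac{\sqrt a+\sqrt b}{2(ab)^{1/4}}$, and since $\sqrt a+\sqrt b\sim\sqrt b\sim 2\sqrt n$ while $(ab)^{1/4}=\widetilde X^{1/2}\sim(2t)^{1/6}n^{1/6}$, this is $\sim(2t)^{-1/6}n^{1/3}$, which is \ref{c55}.

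Next comes the core, the evaluation of $S_2(0)$. Put $z=0$ in \ref{c54}. With the branch of $\sqrt{(z-a)(z-b)}$ fixed by $\sqrt{(z-a)(z-b)}\to z$ at infinity one has $\sqrt{(0-a)(0-b)}=-\widetilde X$, so the bracket in \ref{c54} becomes $1-\widetilde X/x$, while $\big(\frac{\sqrt{-a}+\sqrt{-b}}{2}\big)^2=-\frac14(\sqrt a+\sqrt b)^2$, so the logarithm in the first term of \ref{c54} contributes $\pm i n\pi$, i.e.\ a factor $(-1)^n$ upon exponentiation; this is exactly why the statement carries $(-1)^n$, consistently with $(-1)^nP_n(0)>0$ (the point $0$ lies to the left of the zeros of the monic $P_n$). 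Hence
\[
(-1)^n\exp[-S_2(0;t,\alpha)]=\exp\Big[n\ln\tfrac{(\sqrt a+\sqrt b)^2}{4}-\tfrac{1}{2\pi}\int_a^b\tfrac{{\rm v}(x)}{\sqrt{(b-x)(x-a)}}\big(1-\tfrac{\widetilde X}{x}\big)\,dx\Big].
\]
Writing ${\rm v}(x)=x-\alpha\ln x+t/x$ I would compute this integral term by term with the elementary integrals of Appendix $A$ together with the one extra integral
\[
\int_a^b\frac{\ln x\,dx}{x\sqrt{(b-x)(x-a)}}=\frac{\pi}{\widetilde X}\ln\frac{4ab}{(\sqrt a+\sqrt b)^2},
\]
which follows from the involution $x\mapsto ab/x$ (it maps $[a,b]$ onto itself and reduces this integral to the known $\int_a^b\ln x\,(b-x)^{-1/2}(x-a)^{-1/2}dx=\pi\ln\frac{(\sqrt a+\sqrt b)^2}{4}$). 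Using \ref{c11}, \ref{c12} to eliminate $a+b$, the integral collapses to the exact expression
\[
\frac{1}{2\pi}\int_a^b\frac{{\rm v}(x)}{\sqrt{(b-x)(x-a)}}\Big(1-\frac{\widetilde X}{x}\Big)dx=n+\alpha+\frac{t}{\widetilde X}-\widetilde X+\frac{\alpha}{2}\ln\frac{16ab}{(\sqrt a+\sqrt b)^4}.
\]

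Subtracting this from $n\ln\frac{(\sqrt a+\sqrt b)^2}{4}$, inserting the exact identity recorded in the first paragraph and expanding $n\ln\!\big(n(1+\cdots)\big)=n\ln n+\frac{\alpha}{2}+\frac{\widetilde X}{2}+\frac{t}{2\widetilde X}+{\rm O}(\widetilde X^2/n)$, the intermediate terms telescope and leave
\[
\ln\!\big[(-1)^n\exp(-S_2(0))\big]=n\ln n-n+\tfrac32\widetilde X-\tfrac{t}{2\widetilde X}-\tfrac{\alpha}{2}-\tfrac{\alpha}{2}\ln\tfrac{16ab}{(\sqrt a+\sqrt b)^4}+o(1).
\]
Inserting $\widetilde X=(2nt)^{1/3}+\frac{\alpha}{3}+o(1)$, $t/\widetilde X=o(1)$ and $\frac{16ab}{(\sqrt a+\sqrt b)^4}=(2t)^{2/3}n^{-4/3}(1+o(1))$ then produces \ref{c56} (the two copies of $\frac{\alpha}{2}$ cancelling); multiplying \ref{c55} by \ref{c56} via \ref{c52} gives \ref{c57}.

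The routine part is the algebra just described; the delicate point, and the real content of the theorem, is the uniformity asserted at the end, over $t\in(0,a_0]$, $\alpha$ in compact subsets of $(0,\infty)$, $n\to\infty$, $nt\to\infty$. Two uniformities must be verified. The first, that the expansions of $a,b,\widetilde X$ and of the elementary integrals above hold uniformly in this range, is harmless, because the error ratios $\widetilde X/n$, $\widetilde X^2/n$, $a/\widetilde X$, $t/\widetilde X$ are all uniformly $o(1)$ there. The second, and the hard one, is that the representation \ref{c52}, taken from \cite{YN1998} under the convexity of ${\rm v}$, must remain valid \emph{uniformly down to $z=0$} even though the left edge $a\to 0$, so that $z=0$ sits at distance only $\sim a\ll b-a$ from the band; one has to read off from \cite{YN1998} that its Coulomb-fluid derivation of \ref{c52} yields a relative error uniform on $\{z\le 0\}$ throughout the stated parameter range, not merely on compact subsets of $\mathbb C\setminus[a,b]$. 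That is where the justification, as opposed to the computation, really lies.
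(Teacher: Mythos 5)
Your proposal is correct and follows essentially the same route as the paper: evaluate $S_1(0)$ and $S_2(0)$ via the Coulomb--fluid formulas from \cite{YN1998}, compute the integral in $S_2$ exactly with the Appendix A identities, eliminate $a+b$ through (\ref{c11})--(\ref{c12}), and then insert $\sqrt{ab}\sim(\widetilde{n}t)^{1/3}+\alpha/3$ (your exact expression for $(-1)^n\exp[-S_2(0)]$ is the paper's, rewritten using the constraint equations). Your closing caveat about uniformity of (\ref{c52}) near $z=0$ as $a\to0$ is a fair remark, but the paper likewise simply invokes \cite{YN1998} there, so it is not a deviation from the argument being assessed.
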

\begin{proof}
Recall the quartic equation satisfied by $\widetilde{X},$
$$
\widetilde{X}^4-\al\:\widetilde{X}^3-(2n+\al)t\:\widetilde{X}-t^2=0,
$$
where $\widetilde{X}=\sqrt{ab}$.
\par
Let $\widetilde{n}=2n+\alpha$. The relevant solution of the quartic, reads, for large $\widetilde{n},$
\bea
\frac{1}{\widetilde{X}}=(\widetilde{n}t)^{-\frac{1}{3}}-\frac{\alpha}{3}(\widetilde{n}t)^{-\frac{2}{3}}
+\frac{\alpha^{3}}{81}(\widetilde{n}t)^{-\frac{4}{3}}+\left(\frac{\alpha^{4}}{243}-\frac{t^2}{3}\right)(\widetilde{n}t)^{-\frac{5}{3}}
+{\rm O}\left((\widetilde{n}t)^{-2}\right),\nonumber
\eea
where $t\in(0, a_{0}], 0<a_{0}<\infty $, $\alpha>0$, $n\rightarrow\infty$.
\par
From (\ref{c11}) and (\ref{c12}), we see that
\begin{align*}
a=\widetilde{n}+\frac{t}{\widetilde{X}}-\sqrt{\left(\widetilde{n}+\frac{t}{\widetilde{X}}\right)^{2}-\widetilde{X}^{2}}=
\frac{t^{\frac{2}{3}}}{2\widetilde{n}^{\frac{1}{3}}}+\frac{\alpha{t^{\frac{1}{3}}}}{3\widetilde{n}^{\frac{2}{3}}}
+\frac{\alpha^{2}}{6\widetilde{n}}
+\frac{5\alpha^{3}}{81t^{\frac{1}{3}}\widetilde{n}^{\frac{4}{3}}}
+{\rm O}(\widetilde{n}^{-\frac{5}{3}}),
\end{align*}
and
\begin{align*}
b=\widetilde{n}+\frac{t}{\widetilde{X}}+\sqrt{\left(\widetilde{n}+\frac{t}{\widetilde{X}}\right)^{2}
-\widetilde{X}^{2}}=2\widetilde{n}+\frac{3t^{\frac{2}{3}}}{2\widetilde{n}^{\frac{1}{3}}}
-\frac{\alpha{t^{\frac{1}{3}}}}{\widetilde{n}^{\frac{2}{3}}}-\frac{\alpha^{2}}{6\widetilde{n}}
-\frac{\alpha^{3}}{27t^{\frac{1}{3}}\widetilde{n}^{\frac{4}{3}}}+{\rm O}(\widetilde{n}^{-\frac{5}{3}}),
\end{align*}
where $t\in(0, a_{0}],$ $0<a_{0}<\infty$, $\alpha>0$, $n\rightarrow\infty$.
\par
Hence
\bea
ab=(\widetilde{n}t)^{\frac{2}{3}}+\frac{2}{3}\alpha(\widetilde{n}t)^{\frac{1}{3}}+ {\rm O}(1),
\quad {\rm which\:implies,} \quad \sqrt{ab}=(\widetilde{n}t)^{\frac{1}{3}}+\frac{\alpha}{3}+{\rm O}\left(n^{-\frac{1}{3}}\right). \nonumber
\eea
\par
From the expression for $\exp(-S_{1}(z))$, see (\ref{c53}), we find,
\bea\label{c63}
\exp[-S_{1}(0; t, \alpha)]\sim\frac{1}{2}\left(\frac{b}{a}\right)^{\frac{1}{4}}\sim 2^{-\frac{1}{6}}n^{\frac{1}{3}}t^{-\frac{1}{6}}.
\eea
We now evaluate $S_{2}(0;t,\alpha)$ by setting $z=0$ in (\ref{c54}). 
With the aid of the integral identities in Appendix $A$, followed by some computations, we find,
\begin{align*}
\exp[-S_{2}(0; t, \alpha)]&=
(-1)^{n}\left(n+\frac{\alpha}{2}
+\frac{t}{2\sqrt{ab}}+\frac{\sqrt{ab}}{2}\right)^{n}\left(\frac{n}{\sqrt{ab}}+\frac{\alpha}{2\sqrt{ab}}
+\frac{t}{2ab}+\frac{1}{2}\right)^{\alpha}\nonumber\\
&\times \exp\left[-n-\frac{\alpha}{2}-\frac{t}{\sqrt{ab}}+\frac{\sqrt{ab}}{2}+\left(n+\frac{\alpha}{2}\right)\frac{t}{ab}
+\frac{t^{2}}{2(ab)^{\frac{3}{2}}}\right].
\end{align*}
Now since $\sqrt{ab}\sim (\widetilde{n}t)^{\frac{1}{3}}+\frac{\alpha}{3}$,
the above becomes (\ref{c56}). With the expressions for
${\rm exp}(-S_{1}(0; t, \alpha))$  and ${\rm exp}(-S_{2}(0; t, \alpha)),$ the asymptotic
 estimations for $P_{n}(0; t, \alpha),$ namely, (\ref{c57}) follows immediately.
\end{proof}

\begin{remark}
For convenience, we rewrite the asymptotic estimation of $(-1)^{n}P_{n}(0; t, \alpha)$, (\ref{c57}), as
\begin{align}\label{c68}
(-1)^{n}P_{n}(0; t, \alpha) &\sim (-1)^{n}\exp[-S_{1}(0; t, \alpha)-S_{2}(0; t, \alpha)]\nonumber\\
 &\sim n^{n}(2t)^{-\frac{1}{6}-\frac{\alpha}{3}}\exp\left(-n+3\cdot2^{-\frac{2}{3}}n^{\frac{1}{3}}t^{\frac{1}{3}}
 +\frac{1}{3}(1+2\alpha)\ln{n}\right)\nonumber\\
 &=n^{n+\alpha+\frac{1}{2}}e^{-n}\exp\left(\frac{3}{2}(2nt)^{\frac{1}{3}}-\frac{1+2\alpha}{6}\ln(2nt)\right)\nonumber\\
 &\sim \Gamma(n+1+\alpha)\exp\left(-\frac{1}{2}\ln(2\pi)+\frac{3}{2}(2nt)^{\frac{1}{3}}-\frac{1+2\alpha}{6}\ln(2nt)\right)\nonumber\\
 &= \frac{\Gamma(n+1+\alpha)}{\Gamma(1+\alpha)}\exp\left(\ln\left(\frac{\Gamma(1+\alpha)}{\sqrt{2\pi}}\right)
 +\frac{3}{2}(2nt)^{\frac{1}{3}}-\frac{1+2\alpha}{6}\ln(2nt)\right),
 \end{align}
and hence (noting the exact evaluation---(2.6)),
 $$
 \frac{P_n(0; t,\alpha)}{P_n(0; 0,\alpha)}\sim
 {\rm exp}\left(c_2(\alpha)+\frac{3}{2}\:(2nt)^{1/3}-\frac{1+2\alpha}{6}\:\ln(2nt)\right),
 $$
 from which  $c_2(\alpha)$ is identified to be
 $$
 \ln\left(\frac{\Gamma(1+\alpha)}{\sqrt{2\pi}}\right).
 $$
In the pan-ultimate step, leading to (\ref{c68}),
$$
{\sqrt{2\pi}}\:{\rm e}^{(n+\alpha+1/2)\ln{n}-n}
$$
is replaced by
$$
\Gamma(n+1+\alpha).
$$
The singular perturbation, obtained through the multiplication of ${\rm e}^{-t/x}$ on Laguerre weight,
$x^{\alpha}{\rm e}^{-x}$  causes a large distortion, in such a way that,
$$
\frac{P_n(0; t,\alpha)}{P_n(0; 0,\alpha)}
\sim {\rm exp}\left(\frac{3}{2}\;(2nt)^{1/3} -\frac{1+2\alpha}{6}\:\ln(2nt)+c_2(\alpha)\right).
$$
Identifying $2nt$ by $s$, then (\ref{c68}) becomes (\ref{c67}).
\par
Hence, the equation (\ref{rr1}) reads,
$$
c_{1}(\alpha+1)-c_{1}(\alpha)=c_{2}(\alpha)=\ln\left(\frac{\Gamma(1+\alpha)}{\sqrt{2\pi}}\right),\quad \alpha>0.
$$
Noting the properties of the Barnes-$G$ function, gives,
$$
c_1(\alpha)=\ln\:\frac{G(\alpha+1)}{(2\pi)^{\alpha/2}}.
$$
\end{remark}
However, a rigorous determination of $c_{1}(\alpha)$ remains open.

\section{Conclusion}

The (finite $n$) Hankel determinant generated by a singularly deformed
Laguerre weight, $x^{\alpha}{\rm exp}(-x - t/x),\;\;\alpha>-1,\;\;t\geq 0,\;\;x\geq 0,$
was shown to be intimately related to a particular (finite $n$) ${\rm P_{III}}.$
In the double scaling scheme considered in this paper, the ${\it infinite Hankel determinant,}$  has an integral representation in terms of
a $C$ potential, which satisfies a second order non-linear ode. Up to minor changes of variable, this is equivalent
to a ${\rm P_{III}}$ with smaller number of parameters. Asymptotic expansion of
the {\it scaled} Hankel determinant are found, through the relevant Painlev\'e equations.
\\
\\
{\bf Appendix A: Some Integration Identities.}
\par
The integrals listed below, valid for $0<a<b,$  can be found in the\cite{GR2007}, and in \cite{YHM2013} and \cite{YM2012}:
$$
\int_{a}^{b}\frac{dx}{\sqrt{(b-x)(x-a)}}=\pi. \eqno{(A1)}
$$

$$
\int_{a}^{b}\frac{x\:dx}{\sqrt{(b-x)(x-a)}}=\frac{(a+b)\pi}{2}. \eqno{(A2)}
$$

$$
\int_{a}^{b}\frac{dx}{x\sqrt{(b-x)(x-a)}}=\frac{\pi}{\sqrt{ab}}. \eqno{(A3)}
$$

$$
\int_{a}^{b}\frac{dx}{x^2\sqrt{(b-x)(x-a)}}=\frac{(a+b)\pi}{2(ab)^{\frac{3}{2}}}. \eqno{(A4)}
$$

$$
\int_{a}^{b}\frac{\ln{x}}{\sqrt{(b-x)(x-a)}}dx=2\pi\ln\left(\frac{\sqrt{a}+\sqrt{b}}{2}\right). \eqno{(A5)}
$$

$$
\int_{a}^{b}\frac{\ln{x}}{x\sqrt{(b-x)(x-a)}}dx=\frac{2\pi}{\sqrt{ab}}\ln\frac{2\sqrt{ab}}{\sqrt{a}+\sqrt{b}}. \eqno{(A6)}
$$
\\
{\bf Acknowledgement.}
\par
We would like to thank the Macau Science and Technology Development Fund for generous support: FDCT 077/2012/A3.

\end{document}